\documentclass[a4paper,10pt]{article}
	\usepackage[utf8]{inputenc}
	\usepackage{amssymb}
	\usepackage{amsmath}
	\usepackage{amsfonts}
	\usepackage{amsthm}
	\usepackage{stmaryrd}
	\usepackage{mathrsfs}
	\usepackage{algorithm,algorithmic,refcount}
	\usepackage{xcolor}
	\usepackage{tkz-euclide}

	\usepackage{amsopn,url}
	\usepackage{epstopdf}
	
	\usepackage{tikz}
	\usetikzlibrary{matrix,decorations.pathreplacing,calc}

	\usepackage{natbib}
	\setcitestyle{square}
	\setcitestyle{numbers}
	\setcitestyle{comma}
	
	\newcommand{\R}{\mathbb {R}}
	\newcommand{\I}{{I}}
	\newcommand{\J}{{J}}
	\newtheorem{theorem}{Theorem}
	\newtheorem{example}[theorem]{Example}
	\newtheorem{corollary}[theorem]{Corollary}
	\newtheorem{lemma}[theorem]{Lemma}
	\newtheorem{remark}[theorem]{Remark}

	\newcommand{\newCol}{\ensuremath{j_t}}
    \newcommand{\newRow}{\ensuremath{i_t}}
    \newcommand{\minRatio}{\ensuremath{\,\mathrm{minRatio}}}
    \newcommand{\Vol}{\ensuremath{\,\mathrm{Vol}}}
    \newcommand{\rank}{\ensuremath{\,\mathrm{rank}}}
    \newcommand{\upperBound}{\ensuremath{\,\mathrm{bound}}}

    \author{Alice Cortinovis\footnote{Institute of Mathematics, EPF Lausanne, 1015 Lausanne, Switzerland. E-mail: alice.cortinovis@epfl.ch. The work of Alice Cortinovis has been supported by the SNSF research project \emph{Fast algorithms from low-rank updates}, grant number: 200020\_178806.} \and Daniel Kressner\footnote{Institute of Mathematics, EPF Lausanne, 1015 Lausanne, Switzerland. E-mail: daniel.kressner@epfl.ch.} }
	  
	\title{Low-rank approximation in the Frobenius norm by column and row subset selection}
	
	
	\begin{document}
	
	\maketitle
	
\begin{abstract}
A CUR approximation of a matrix $A$ is a particular type of low-rank approximation $A\approx CUR$, where $C$ and $R$ consist of columns and rows of $A$, respectively. One way to obtain such an approximation is to apply column subset selection to $A$ and $A^T$.
In this work, we describe a numerically robust and much faster variant of the column subset selection algorithm proposed by Deshpande and Rademacher, which guarantees an error close to the best approximation error in the Frobenius norm.
For cross approximation, in which $U$ is required to be the inverse of a submatrix of $A$ described by the intersection of $C$ and $R$, we obtain a new algorithm with an error bound that stays within a factor $k+1$ of the best rank-$k$ approximation error in the Frobenius norm. To the best of our knowledge, this is the first deterministic polynomial-time algorithm for which this factor is bounded by a polynomial in $k$. Our derivation and analysis of the algorithm is based on derandomizing a recent existence result by Zamarashkin and Osinsky. To illustrate the versatility of our new column subset selection algorithm, an extension to low multilinear rank approximations of tensors is provided as well.         
\end{abstract}

\section{Introduction}

Given an $m\times n$ matrix $A$ and an integer $k$, typically much smaller than $m$ and $n$, the \emph{column subset selection problem} aims at determining an index set $I \subset \{1, \ldots, n\}$ of cardinality $k$ such that the corresponding $k$ columns $A(:,I)$ represent a good approximation of the range of $A$. This problem has broad applications in a diversity of disciplines, including scientific computing, model reduction, and statistical data analysis. While column subset selection is a classical problem in numerical linear algebra, closely connected to rank-revealing QR factorizations~\cite{Chan1987,Chandrasekaran1994,Gu1996}, new significant theoretical and algorithmic developments have been achieved during the last two decades within the model reduction and theory of algorithms communities. In particular, this concerns the interplay between column subset selection and interpolation~\cite{Barrault2004,Chaturantabut2010,Drmac2016} as well as the development and analysis of randomized algorithms, see~\cite{Boutsidis2009,Drineas2008,Frieze2004,Woodruff2014} for a few references representing this research direction.

This paper is concerned with algorithmic improvements and extensions of the seminal work by Deshpande, Rademacher and co-authors~\cite{Deshpande2010,Deshpande2006} on column subset selection. In~\cite{Deshpande2006} the existence of an index set $I$ such that
\begin{equation}\label{eq:CS}
    \| A - A(:,I)A(:,I)^+ A \|_F^2 \le (k+1)\left(\sigma_{k+1}(A)^2 + \ldots + \sigma_{\min\{m,n\}}(A)^2\right)
\end{equation}
has been established. Here, $\|\cdot\|_F$ and $(\cdot)^+$ denote the Frobenius norm and the Moore--Penrose inverse of a matrix, respectively. We let $\sigma_{1}(A) \ge \sigma_{2}(A) \ge  \cdots$ denote the singular values of $A$. Note that $A(:,I)A(:,I)^+$ is an orthogonal projector and the bound~\eqref{eq:CS} measures how well all the columns of $A$ are approximated by the subset of column contained in $I$.
The bound~\eqref{eq:CS} is remarkable because the singular value decomposition (SVD) of $A$ implies that the best approximation error $\| A - QQ^+ A \|_F^2$ attained by an \emph{arbitrary} $m\times k$ matrix $Q$ is given by  $\sigma_{k+1}(A)^2 + \ldots + \sigma_{\min\{m,n\}}(A)^2$. The bound~\eqref{eq:CS} is larger by a factor that is only linear in $k$. More generally, we will call any quasi-optimal bound with a factor that is at most polynomial in $k$ (and independent of $m,n$ or $A$) a \emph{polynomial bound}. The proof of~\eqref{eq:CS} proceeds by defining a suitable discrete probability distribution on index tuples such that the expected value of the error with respect to this distribution satisfies the bound. This then implies the existence of at least one index set satisfying the bound as well.
We remark that the factor $k+1$ in~\eqref{eq:CS} cannot be improved~\cite[Proposition 3.3]{Deshpande2006}. In~\cite{Deshpande2010}, a deterministic algorithm has been developed by derandomizing this approach using the method of conditional expectations. These conditional expectations are given in terms of coefficients of certain characteristic polynomials and the algorithm from~\cite{Deshpande2010} attains efficiency by cheaply updating these coefficients. However, it is well known that working with characteristic polynomials in finite precision arithmetic is prone to massive numerical cancellation~\cite{RehmanIpsen2011} and, as we will see, the algorithm from~\cite{Deshpande2010} is also affected by numerical instability. Our first contribution, presented in Section~\ref{sec:colsel}, consists of deriving a formulation of the algorithm that updates singular values instead of coefficients of characteristic polynomials. While our new variant enjoys the same favorable complexity, numerical experiments with matrices of different singular value decay indicate that it is numerically robust, achieving~\eqref{eq:CS} up to the level of roundoff error. Based on a minor extension of the theory from~\cite{Deshpande2010,Deshpande2006}, we will also present a modification of the column selection strategy that results in significant speed ups of the algorithm.

In Section~\ref{sec:matrixapprox}, we extend the developments from~\cite{Deshpande2010} to the problem of determining a rank-$k$ approximation of the form
\[
 A\approx CUR,
\]
where $C = A(:,J)$ and $R = A(I,:)$ contain $k$ selected columns and rows of $A$, respectively. There is a simple and well established strategy to derive such an approximation, see, e.g.,~\cite{Drineas2008,Sorensen2016}: One first applies column subset selection to $A$ and $A^T$ in order to determine $C$ and $R$, respectively. Given $C$ and $R$, the choice 
\begin{equation} \label{eq:cur}
 U = C^+ A R^+
\end{equation}
then minimizes the Frobenius norm error. We will show that this strategy combined with~\eqref{eq:CS} results in an error that is at most a factor $\sqrt{2k+2}$ larger than the best rank-$k$ approximation error. While this is clearly a favorable bound, the choice~\eqref{eq:cur} comes with a disadvantage. It involves the full matrix $A$, but sometimes only partial information on $A$ is available and has been used to determine $I,J$. One example for such a situation is the Chebfun2 construction for approximating bivariate functions~\cite{Townsend2013}, which uses a coarse discretization to cheaply determine $I,J$ and then evaluates the full matrix $A$ only along the cross containing the rows and columns determined by $I$ and $J$, respectively. The choice $U = A(I,J)^{-1}$ then leads to a rank-$k$ approximation of the form
\begin{equation*} 
 A\approx A(:,J) A(I,J)^{-1} A(I,:),
\end{equation*}
which is often called cross approximation. Choosing $I,J$ via column subset selection is not advisable in this setting; it may lead to (nearly) singular $A(I,J)$ and result in an unfavorable approximation error.
 On the other hand, Goreinov and Tyrtyshnikov~\cite{GoreinovTyrtyshnikov2001} have established a polynomial bound for cross approximation in the maximum norm when choosing $I,J$ such that the volume of $A(I,J)$ is maximal. Recently, Zamarashkin and Osinsky~\cite{Zamarashkin2018} derived a polynomial bound in the Frobenius norm by extending the techniques from~\cite{Deshpande2006}. However, as far as we know, there is no polynomial time deterministic algorithm that guarantees a polynomial bound (in any norm); popular greedy algorithms lead to exponential bounds~\cite{CKM2019,Harbrecht2012} at best.
One major contribution of this work is to derive such an algorithm via an extension of~\cite{Deshpande2010}; our algorithm guarantees a Frobenius norm error that is at most a factor $k+1$ larger than the best approximation error.

Section~\ref{sec:tensors} contains an extension to tensors. In particular, we derive a deterministic algorithm that obtains a multilinear low-rank approximation that is constructed from the fibers of the tensor and satisfies a polynomial bound. Although our approach is a relatively straightforward extension of~\eqref{eq:cur} and related approaches have been proposed in the literature~\cite{Drineas2007, Goreinov2008, Saibaba2016}, we are not aware that such an algorithm has been explicitly spelled out and analyzed.

\section{Column subset selection} \label{sec:colsel}


We start by providing more details on the approach from~\cite{Deshpande2010,Deshpande2006} for the column subset selection problem. In the following we consider a matrix $A \in \R^{m \times n}$ with $m \le n$ and rank at least $k$. We let $a_i$ denote the $i$th column of $A$ and $\pi_{i_1, \ldots, i_k}A$ the orthogonal projection of $A$ on the subspace spanned by the columns $a_{i_1}, \ldots, a_{i_k}$, that is,
\begin{equation*}
    \pi_{i_1, \ldots, i_k}A := A(:, I ) \cdot A(:, I)^{+}\cdot A = QQ^T A,
\end{equation*}
where $I = ( i_1,\ldots,i_k ) \in \{1, \ldots, n\}^k$ and $Q$ denotes an orthonormal basis of $A(:, I)$. Let us emphasize that $I$ is now a \emph{tuple}. Although order is not important and we are ultimately interested in an index \emph{set}, working with tuples simplifies the subsequent definition and manipulation of probability distributions.
The \emph{volume} of a rectangular matrix $B \in \R^{m \times k}$ with $k \le m$ is defined as $\Vol(B) := \prod_{i=1}^k \sigma_i(B)^2$. Note that $\Vol(B)^2 = \det(B^TB)$.

We now define a discrete probability distribution on integer tuples of the form $I \in \{1, \ldots, n\}^k$ corresponding to a selection of $k$ columns from $A$. For this purpose, let $X = (X_1, \ldots, X_k)$ be a $k$-tuple of random variables with values in $\{1, \ldots, n\}$ such that
\begin{equation} \label{eq:discpdf}
    \mathbb{P}(X = I) := \frac{\Vol(A(:,I))^2}{\sum_{J \in \{1, \ldots, n\}^k} \Vol(A(:,J))^2}.
\end{equation}
By convention, $\Vol\left(A(:,I)\right) = 0$ whenever $i_1,\ldots, i_k$ contain repeated indices.
Then~\cite[Theorem 1.3]{Deshpande2006} shows that
\begin{equation}\label{eq:ECS}
    \mathbb{E} [ \| A - \pi_{X_1, \ldots, X_k} A \|_F^2] \le (k+1)\left(\sigma_{k+1}^2 + \ldots + \sigma_{m}^2 \right).
\end{equation}
In particular, this implies the existence of $I$ satisfying this bound. 

In view of~\eqref{eq:discpdf} and the prominent role played by maximum volume submatrices in low-rank approximation~\cite{GoreinovTyrtyshnikov2001}, it is tempting to expect that the $k$ columns of maximum volume satisfy~\eqref{eq:CS}. However, not only that it is NP hard to choose such columns~\cite{Civril2009}, but they also fail to have this property. For instance, for $k=1$ consider the $2 \times n$ matrix
\begin{equation*}
    A = \begin{bmatrix}
        a(1+\varepsilon) & b & b & \ldots & b \\
        -b(1 + \varepsilon) & a & a & \ldots & a \\
    \end{bmatrix}
\end{equation*}
with $a^2 + b^2 = 1$ and $\varepsilon > 0$. The column of maximum volume (that is, of maximum Euclidean norm) is the first one. The approximation error obtained by this choice is given by $\| A - \pi_1 A \|_F^2 = n-1$, which is much larger than $2 \sigma^2_2 = 2 (1+\varepsilon)^2$ for $\varepsilon$ sufficiently small. Note that choosing any of the other columns yields the best approximation error $(1 + \varepsilon)^2 = \sigma_2^2$.

\subsection{Algorithm by Deshpande and Rademacher}\label{sec:detAlgorithm}

Deshpande and Rademacher~\cite{Deshpande2010} derived a deterministic algorithm for column subset selection by derandomizing~\eqref{eq:ECS} using the 
method of conditional expectations.

More specifically, the first step of the algorithm chooses an index $i_1$ such that
\[    \mathbb{E} \left [ \| A - \pi_{X_1, \ldots,X_k}A \|_F^2 \mid X_1 = i_1 \right ]
\]
is minimized. By construction, this quantity still satisfies the bound~\eqref{eq:ECS}. More generally, having $t-1$ indices $i_1,\ldots,i_{t-1}$ selected, step $t$ chooses an index $i_t$ such that
\begin{equation}\label{eq:choiceit}
    \mathbb{E} \left [ \| A - \pi_{X_1, \ldots,X_k}A \|_F^2 \mid X_1 = i_1, \ldots, X_{t-1} = i_{t-1}, X_t = i_t \right ]
\end{equation}
is minimized. After $k$ steps we arrive at an index set $I$ of cardinality $k$ such that the desired bound~\eqref{eq:CS} holds.

For the algorithm to be practical, it is crucial to compute the conditional expectations~\eqref{eq:choiceit} efficiently. Lemma 21 in~\cite{Deshpande2010} shows that
\begin{equation*}
    \mathbb{E} \left[ \| A - \pi_{X_1, \ldots, X_k} A \|_F^2 \mid X_1 = i_1, \ldots, X_t = i_t\right ] = (k-t+1)\frac{c_{m-k+t-1}(BB^T)}{c_{m-k+t}(BB^T)},
\end{equation*}
where the right-hand side involves the matrix $B = A - \pi_{i_1,\ldots,i_t} A$ and coefficients $c_j\equiv c_j(BB^T)$ of the characteristic polynomial
\begin{equation}\label{eq:coeffCharPoly}
(-\lambda)^m + c_{m-1}(-\lambda)^{m-1} + \ldots + c_1(-\lambda) + c_0 := \det(BB^T - \lambda I).
\end{equation}
It is therefore required to compute in every step for all values of $i$, the ratios 
\begin{equation}\label{eq:wantedRatios}
\frac{c_{m-k+t-1}(B_iB_i^T)}{c_{m-k+t}(B_iB_i^T)}
\end{equation}
where $B_i = A - \pi_{i_1,\ldots,i_{t-1},i} A$. 

In the following, we discuss the computation of~\eqref{eq:wantedRatios} and show how the minimization problem~\eqref{eq:choiceit} can be relaxed in order to accelerate the search for suitable indices.

\subsection{Computation of characteristic polynomial coefficients}\label{sec:charpoly}
%
%
%
%

Assuming that the first $t-1$ indices have been selected, we set $B := A - \pi_{i_1, \ldots, i_{t-1}}A$. Then 
\begin{equation*}
    B_i = B - \pi_i B = \left ( I - \frac{b_i b_i^T}{\| b_i \|_2^2} \right ) B
\end{equation*}
is a rank-1 modification of $B$. Deshpande and Rademacher~\cite{Deshpande2010} propose two methods to compute~\eqref{eq:wantedRatios} for $i=1, \ldots, n$. In the following, we summarize them briefly. 
\begin{enumerate}
 \item Algorithm 2 in~\cite{Deshpande2010} computes $BB^T$ explicitly and then computes $B_i B_i^T$ as a rank-2 update of $BB^T$ for every $i=1, \ldots, n$. The characteristic polynomial of $B_i B_i^T$ is computed by establishing a similarity transformation to a matrix in Frobenius normal form~\cite[Section 16.6]{Buergisser1997}. Fast matrix-matrix multiplication and inversion can be exploited so that the cost of this approach is $O(nm^{\omega} \log m)$, where $\omega \le 2.373$ is the best exponent of matrix-matrix multiplication complexity.
 \item Algorithm 3 in~\cite{Deshpande2010} computes the thin SVD of $B = U\Sigma V^T$, the characteristic polynomial of $BB^T$ from the squared singular values of $B$, and the auxiliary polynomials $g_j(x) = \prod_{\ell \neq j} \left (x - \sigma_{\ell}(B)^2\right )$ for $j=1, \ldots, m$. For $h=m-k+t$ and $h=m-k+t-1$, the coefficient $c_h(B_i B_i^T)$ can then be computed as the coefficient of $x^h$ in
 \begin{equation}\label{eq:charpolyupdate}
    \det(\lambda I - BB^T) + \frac{1}{\| b_i \|_2^2} \sum_{j=1}^n \sigma_j^2(B) v_{ij}^2 g_j(x).
 \end{equation}
The cost of this second approach is $O(m^2 n)$.
\end{enumerate}

The problem of computing the Frobenius normal form of a matrix is ``numerically not viable''~\cite{RehmanIpsen2011b}. Also, updating directly the characteristic polynomial as in~\eqref{eq:charpolyupdate} is prone to numerical cancellation, leading to inaccurate results. For instance, consider the $2 \times 2$ matrix
\begin{equation*}
    A = \begin{bmatrix}
            6.583644 \cdot 10^{-7} & 8.113362 \cdot 10^{-3} \\
            8.113362 \cdot 10^{-3} & 100 \\
        \end{bmatrix},
\end{equation*}
and the column selection problem for $k=1$. Algorithm 4 in~\cite{Deshpande2010} using~\eqref{eq:charpolyupdate} selects the first column, giving an error $ \| A - A(:,1)A(:,1)^+ A\|_F \approx 1.2 \cdot 10^{-6} \gg \sqrt{2}\sigma_2(A) = 1.4 \cdot 10^{-10}$.

Therefore, from now on we will avoid updating coefficients of characteristic polynomials and work with singular values instead. More specifically, we will compute the singular values of $B_i$ by updating the SVD of $B$ and then apply the Summation Algorithm~\cite[Algorithm 1]{RehmanIpsen2011} to compute the coefficients of the characteristic polynomial of $B_i B_i^T$ from its eigenvalues (that is, the squared singular values) with $O(m^2)$ operations in a numerically forward stable manner. To describe the updating procedure, consider the (thin) SVD \[
B = U \Sigma V^T,\qquad U \in \R^{m \times m},\quad \Sigma \in \R^{m \times m},\quad V \in \R^{n \times m}.
\]
The (nonzero) singular values of $B_i$ and
\begin{equation*}
    U^T B_i V = (I - U^T \pi_i U ) U^T B V = \left ( I - \frac{U^T b_i b_i^T U}{\| b_i \|_2^2} \right )\Sigma = \left ( I - qq^T \right ) \Sigma,
\end{equation*}
with $q = U^T b_i / \| b_i\|_2$, are identical.
Using standard bulge chasing algorithms (see, e.g.,~\cite[Algorithm 3.4]{Yoon1996} and~\cite{Aurentz2018}) it is possible to find orthogonal matrices $Q,W \in \R^{m \times m}$ such that $Q^T q = e_1$, where $e_1$ denotes the first unit vector, and $Q^T \Sigma W$ is upper bidiagonal. In turn, the singular values can be computed from the bidiagonal matrix 
\begin{equation*}
Q^T (I - qq^T)\Sigma W = (I-e_1 e_1^T)(Q^T\Sigma W).
\end{equation*}
The matrices $Q$ and $W$ are composed of $O(m^2)$ Givens rotations~\cite[Section 5.1]{GolubVanLoan2013} and the computation of $Q^T \Sigma W$ requires to apply each of these rotations to at most $3$ vectors. In turn, the cost of computing this bidiagonal matrix is $O(m^2)$, which is identical with the cost of computing its singular values~\cite[Section 8.6]{GolubVanLoan2013}.

\subsection{Overall algorithm}

The described variation of the column subset selection algorithm by Deshpande and Rademacher is summarized in Algorithm~\ref{alg:CS}. One execution of line~\ref{line:svd} is $O(nm^2)$, lines~\ref{line:beginupdate}--\ref{line:endupdate} are $O(m^2)$, and lines~\ref{line:updBbegin}--\ref{line:updBend} are $O(knm)$. In summary, the overall complexity of Algorithm~\ref{alg:CS} is $O(knm^2)$. This is identical with the complexity of~\cite[Algorithm 4]{Deshpande2010} combined with~\cite[Algorithm 3]{Deshpande2010}, and it is better than~\cite[Algorithm 4]{Deshpande2010} combined with~\cite[Algorithm 2]{Deshpande2010}.

Note that instead of lines~\ref{line:updBbegin}--\ref{line:updBend} we could have updated $B \leftarrow B - \pi_{i_t}B$. However, we noticed that recomputing $B$ in lines 18--19 tends to improve accuracy and it does not change the overall complexity. 

\begin{algorithm} \small 
    \begin{algorithmic}[1]\caption{Column Subset Selection}\label{alg:CS}
        \REQUIRE{$A \in \R^{m\times n}$, rank $1 \le k < m$}
        \ENSURE{Column indices $S \in \{1, \ldots, n\}^k$}
        \STATE{Initialize $S = \emptyset$ and $B = A$}
        \FOR{$t=1, \ldots, k$}
            \STATE{ \label{line:svd} $[U, \Sigma, \sim] = \text{svd}(B)$}
            \STATE{$\minRatio = +\infty$}
            \FOR{$i=1, \ldots, n$}
                \STATE{\label{line:beginupdate} $q = U^T b_i / \| b_i \|_2$}
                \STATE{\label{line:bulge} $D = Q^T \Sigma W $ bidiagonal matrix obtained by bulge chasing~\cite[Algorithm 3.4]{Yoon1996}}
                \STATE{Compute singular values $\sigma_1, \ldots, \sigma_m$ of $(I - e_1 e_1^T)D$}
                \STATE{\label{line:endupdate} Apply Summation Algorithm~\cite[Algorithm 1]{RehmanIpsen2011} to compute $c_{m-k+t-1}(B_i B_i^T)$ and $c_{m-k+t}(B_i B_i^T)$ from eigenvalues $\sigma^2_1, \ldots, \sigma^2_m$}
                \STATE{Set ratio $= c_{m-k+t-1}(B_i B_i^T) / c_{m-k+t}(B_i B_i^T)$}\label{substitutefromhere}
                \STATE \label{line:condition} {\bf if } ratio $ < \minRatio$ {\bf then } Set $\minRatio =$ ratio and $i_t = i$ {\bf end if }
            \ENDFOR
            \STATE{Append index $S \gets (S, i_t)$}
            \STATE{\label{line:updBbegin}$[Q, \sim] = \text{qr}(A(:,S))$}
            \STATE{\label{line:updBend}$B = A - QQ^TA$}
        \ENDFOR
        
    \end{algorithmic}
\end{algorithm}

\subsection{Early stopping of column search}\label{sec:stoppingearly}

For each column index, Algorithm~\ref{alg:CS} needs to traverse $O(n)$ columns in order to find the one that minimizes the coefficient ratio or, equivalently, the conditional expectation. This column search can be shortened. To describe the idea, suppose that $i_1, \ldots, i_{t-1}$ have already been selected such that
\begin{equation} \label{eq:inductionstopearly}
    \mathbb{E} \left [ \| A - \pi_{X_1,\ldots,X_k}A \|_F^2 \mid X_1 = i_1, \ldots, X_{t-1} = i_{t-1}\right ] \le (k+1)(\sigma_{k+1}^2 + \ldots + \sigma_m^2)
\end{equation}
holds.
Now, we can choose \emph{any} $i_t$ such that
\begin{equation}\label{eq:relaxed}
    \mathbb{E} \left [ \| A - \pi_{X_1,\ldots,X_k}A \|_F^2 \mid X_1 = i_1, \ldots, X_{t} = i_{t}\right ] \le (k+1)(\sigma_{k+1}^2 + \ldots + \sigma_m^2)
\end{equation}
holds. The existence of $i_t$ is guaranteed by~\eqref{eq:inductionstopearly} but we do not need to find the one that minimizes the conditional expectation. It suffices to always choose in every step an index such that~\eqref{eq:relaxed} is verified. By induction, the error bound~\eqref{eq:CS} still holds.

The discussion above suggests to modify
Algorithm~\ref{alg:CS} such that it computes
    \begin{algorithmic}
        \STATE{$\upperBound = (k+1)\cdot \left ( \sigma_{k+1}^2 + \ldots + \sigma_m^2 \right )$}
    \end{algorithmic}
in the beginning and substitute line~\ref{line:condition} with
    \begin{algorithmic}[1]
    \setcounterref{ALC@line}{substitutefromhere}
     \STATE {\bf if } $(k-t+1) \cdot \text{ratio} \le \upperBound$ {\bf then } Set $i_t = i$ and break {\bf end if }
         \end{algorithmic}
To be able to stop the search early, it is important to test the columns in a suitable order. We found it beneficial to test the columns of $B$ in descending norm. For each step $t$, computing the norms of all columns of $B$ and sorting them has complexity $O(mn + n \log n)$. Although this choice is clearly heuristic, the following lemma provides some justification for it by showing that the column of largest norm is the right choice for $k = 1$ provided that all other columns are sufficiently small. 
\begin{lemma} \label{lemma:indication}
    Let $A = \begin{bmatrix} a_1 & A_2 \end{bmatrix}$. If $ \| A_2 \|_F \le \| a_1 \|_2$ then choosing the first column solves the column selection problem for $k=1$, that is,
    \begin{equation*}
        \| A - a_1 a_1^+ A \|_F^2 \le 2(\sigma_2^2 + \ldots + \sigma_m^2).
    \end{equation*}
\end{lemma}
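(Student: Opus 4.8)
The plan is to reduce the claimed inequality to a statement purely about $\sigma_1(A)$ and then to bound $\sigma_1(A)$ by reducing the relevant optimization to two dimensions. First I would rewrite the error using orthogonality of the projector: since $a_1 a_1^+ = a_1 a_1^T / \|a_1\|_2^2$ is an orthogonal projector,
\[
\|A - a_1 a_1^+ A\|_F^2 = \|A\|_F^2 - \|a_1 a_1^+ A\|_F^2 .
\]
A direct computation gives $\|a_1 a_1^+ A\|_F^2 = \|a_1\|_2^2 + \|a_1^T A_2\|_2^2/\|a_1\|_2^2$ and $\|A\|_F^2 = \|a_1\|_2^2 + \|A_2\|_F^2$. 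Writing $\beta^2 = \|a_1\|_2^2$, $\gamma^2 = \|A_2\|_F^2$ and $p = \|a_1^T A_2\|_2^2/\|a_1\|_2^2$, and using $\sigma_2^2 + \cdots + \sigma_m^2 = \|A\|_F^2 - \sigma_1^2$, the desired bound is equivalent to $2\sigma_1^2 \le 2\beta^2 + \gamma^2 + p$.

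The second step is to bound $\sigma_1^2 = \lambda_{\max}(AA^T)$, where $AA^T = a_1 a_1^T + A_2 A_2^T$. Let $u$ be a unit top eigenvector, let $\hat a = a_1/\|a_1\|_2$, and decompose $u = c\,\hat a + s\,\hat a_\perp$ with $\hat a_\perp \perp \hat a$ a unit vector and $c^2 + s^2 = 1$. Since $u$ lies in the plane $\mathrm{span}\{\hat a,\hat a_\perp\}$, evaluating the quadratic form $u^T(a_1 a_1^T + A_2 A_2^T)u$ shows that $\sigma_1^2$ equals the value at the unit vector $(c,s)$ of the $2\times 2$ matrix
\[
L = \begin{bmatrix} \beta^2 + p & q \\ q & r \end{bmatrix}, \qquad p = \hat a^T A_2 A_2^T \hat a,\ \ q = \hat a^T A_2 A_2^T \hat a_\perp,\ \ r = \hat a_\perp^T A_2 A_2^T \hat a_\perp,
\]
hence $\sigma_1^2 \le \lambda_{\max}(L)$. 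Here the principal submatrix with entries $p,q,r$ is a compression of the positive semidefinite matrix $A_2 A_2^T$, which yields the two constraints $q^2 \le pr$ and $p + r \le \mathrm{tr}(A_2 A_2^T) = \gamma^2$ that I will use below; note also that the value $p$ here coincides with the data quantity defined above.

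It then remains to verify $2\lambda_{\max}(L) \le 2\beta^2 + \gamma^2 + p$. Using the closed form for $\lambda_{\max}(L)$, this is equivalent to $\sqrt{(\beta^2 + p - r)^2 + 4q^2} \le \beta^2 + \gamma^2 - r$, where the right-hand side is nonnegative because $r \le \gamma^2$. Squaring and bounding $q^2 \le pr$ reduces everything to the polynomial inequality
\[
(\beta^2 + p - r)^2 + 4pr \le (\beta^2 + \gamma^2 - r)^2 .
\]
The difference of the two sides is affine in $\beta^2$ with nonnegative slope $2(\gamma^2 - p) \ge 0$, so the worst case is the smallest admissible $\beta^2$; this is exactly where the hypothesis $\gamma^2 \le \beta^2$ enters, allowing me to set $\beta^2 = \gamma^2$. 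At $\beta^2 = \gamma^2$ the difference simplifies and, using $p + r \le \gamma^2$, it is seen to be nonnegative (bottoming out at $r^2 \ge 0$ when $p + r = \gamma^2$), completing the proof.

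I expect the main obstacle to be the bound on $\sigma_1^2$: the naive triangle inequality $\sigma_1^2 \le \beta^2 + \sigma_1(A_2)^2 \le \beta^2 + \gamma^2$ is too weak, since it only yields $2\sigma_1^2 \le 2\beta^2 + 2\gamma^2$, missing the target by $\gamma^2 - p$. One must therefore retain the correlation term $p$ and exploit the two-dimensional structure of the top eigenvector to capture the cancellation between $a_1 a_1^T$ and $A_2 A_2^T$. Tracking $p$ through the computation and recognizing that the hypothesis is used only to make $\beta^2 = \gamma^2$ the extremal case is the crux; the remaining algebra is routine.
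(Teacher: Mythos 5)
Your proof is correct, and while it shares the paper's overall skeleton --- both arguments reduce the claim to the single inequality $2\sigma_1(A)^2 \le 2\|a_1\|_2^2 + \|A_2\|_F^2 + \|a_1^TA_2\|_2^2/\|a_1\|_2^2$ and then settle it via a $2\times 2$ eigenvalue bound --- the mechanism you use to get down to two dimensions is genuinely different. The paper partitions the Gram matrix $A^TA$ with a scalar $(1,1)$ block and invokes a norm-compression inequality, replacing the blocks $b$ and $B^TB+bb^T$ by $\|b\|_2$ and $\|B\|_F^2+\|b\|_2^2$, so that the hypothesis $\|A_2\|_F\le\|a_1\|_2$ enters only at the very end when checking the sign of the characteristic polynomial at the candidate bound. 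You instead compress $AA^T=a_1a_1^T+A_2A_2^T$ onto the plane spanned by $\hat a$ and the orthogonal part of the top eigenvector, and carry along the two structural constraints $q^2\le pr$ and $p+r\le\|A_2\|_F^2$ inherited from positive semidefiniteness of $A_2A_2^T$; the hypothesis then enters through a monotonicity argument that makes $\|a_1\|_2^2=\|A_2\|_F^2$ the extremal case. Your route is somewhat more elaborate (three auxiliary scalars and a final two-variable polynomial verification versus the paper's single characteristic-polynomial evaluation), but it is more self-contained in that it avoids citing the norm-compression inequality, and it makes explicit why the correlation term $p$ must be retained --- a point the paper's proof passes over silently. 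All steps check out: the identity $p'=p$, the nonnegativity of $\beta^2+\gamma^2-r$ needed before squaring, and the final reduction to $3\gamma^4-2\gamma^2(p+r)-(p+r)^2+r^2\ge r^2\ge 0$ under $p+r\le\gamma^2$ are all valid.
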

Note that the condition of the lemma is satisfied if the column norms of $A$ decay sufficiently fast, for instance if $\| a_i \|_2 \le \frac{\| a_1 \|_2}{i}$ for $i=2, \ldots, n$.
\begin{proof}
Without loss of generality we may assume that $\|a_1\|_2 = 1$. By setting $B = A_2 - a_1 a_1^+ A_2 = A_2 - a_1 a_1^T A_2$ and $b = A_2^T a_1$, we have
\[
 A^T A = \begin{bmatrix} 1 & b^T \\ b & A_2^T A_2 \end{bmatrix} = \begin{bmatrix} 1 & b^T \\ b & B^T B + bb^T \end{bmatrix}
\]
and obtain \begin{equation} \label{eq:boundlala}
 \|A^T A\|_2 \le \left\| \begin{bmatrix} 1 & \|b\|_2 \\ \|b\|_2 & \| B^T B + bb^T \|_2 \end{bmatrix} \right\|_2
 \le \left\| \begin{bmatrix} 1 & \|b\|_2 \\ \|b\|_2 & \| B\|_F^2 + \|b \|^2_2 \end{bmatrix} \right\|_2.
\end{equation}
Here, the first inequality is a norm-compression inequality~\cite[Section 9.10]{Bernstein2009} and the second inequality follows from the fact that the involved matrices are positive.

We aim at proving
    \begin{equation*}
        \| A - a_1 a_1^+ A \|_F^2 = \| B \|_F^2 \le 2(\| A \|_F^2 - \| A \|_2^2),
    \end{equation*}
    which is equivalent to
    \begin{equation*}
    \| A \|_2^2 \le 1 + \| b \|_2^2 + \frac{\| B \|_F^2}{2}=:\gamma.
    \end{equation*}
Thus, it remains to show that the larger eigenvalue of the symmetric positive definite $2\times 2$ matrix on the right-hand side of~\eqref{eq:boundlala} is bounded by $\gamma$. For this purpose, we note that its characteristic polynomial is given by
\[
p(\lambda) = \left ( \lambda - 1\right ) \left ( \lambda - \|b\|_2^2 - \|B\|_F^2 \right ) - \|b\|_2^2.
\]
Setting $\gamma = 1 + \| b \|_2^2 + \| B \|_F^2/2$, we obtain
\[
 p(\gamma) = \| B \|_F^2/2 \cdot \left ( 1 - \|b\|_2^2 - \|B\|_F^2/2 \right ) \ge 0,
\]
where we used that $\| b \|_2^2 + \| B \|_F^2 = \| A_2 \|^2_F \le \| a_1 \|^2_2 = 1$. Because $p$ is a parabola with vertex $(1+\|b\|_2^2 + \|B\|_F^2)/2\le \gamma$, it follows that the larger root of $p$ is bounded by $\gamma$, which completes the proof. 
\end{proof}

It is important to not draw too many conclusions from Lemma~\ref{lemma:indication}. Consider, for example, the matrix
\begin{equation*}
    A = \begin{bmatrix} 1 & 0 & 10^{-b} \\ 0 & 1 & 10^{-b} \\ 0 & 0 & 10^{-2b} \end{bmatrix}
\end{equation*}
for some integer $b$, say $b = 16$. For $k = 1$, the optimal choice is the third column, which is the one of smallest norm. This matrix also nicely illustrates that the obvious greedy approach (in order to get $k$ columns of $A$, one first chooses the best column, then the best column in the orthogonal complement, and so on) comes with no guarantees and may, in fact, utterly fail.
For $k = 2$ the optimal choice consists of the first two columns. On the other hand, the greedy approach for $k=2$ first selects the third column and then the first column, resulting in the arbitrarily bad error ratio $\frac{\text{error greedy}}{\text{error best}} \approx 10^b$. 

\subsection{Numerical experiments}\label{sec:numexpCS}

Both variants of Algorithm~\ref{alg:CS}, without and with early stopping, have been implemented in Matlab version R2019a.
As the bulge chasing algorithm in line~\ref{line:bulge} would perform poorly in Matlab, this part has been implemented in C++ and is called via a MEX interface.
All numerical experiments in this work have been run on an eight-core Intel Core i7-8650U 1.90 GHz CPU, 256 KB of level 2 Cache and 16 GB of RAM. Multithreading has been turned off in order to not distort the findings. 

We have applied the algorithm to the following three matrices:
\begin{enumerate}
    \item the Hilbert matrix $A_1 \in \R^{200 \times 200}$ given by $A_1(i,j) = \frac{1}{i+j-1}$;
    \item $A_2 \in \R^{100 \times 200}$ given by $A_2(i,j) = \exp(-0.3 \cdot |i-j|/200)$;
    \item $A_3 \in \R^{100 \times 200}$ given by $A_3(i,j) = \left ( \left ( \frac{i}{200}\right )^{20} + \left ( \frac{j}{200}\right )^{20} \right )^{1/20}$.
\end{enumerate}
The obtained results are shown in Figures~\ref{fig:hilbert200CS},~\ref{fig:exp100200CS}, and~\ref{fig:p100200CS} respectively. Each left plot contains, for different values of $k$, the approximation error $\| A_i - A_i(:,S)A_i(:,S)^+ \|_F$ returned by Algorithm~\ref{alg:CS}, without and with early stopping. We compare with the best rank-$k$ approximation error $\sqrt{\sigma_{k+1}^2 + \ldots + \sigma_m^2}$ and the upper bound~\eqref{eq:CS}, that is, $\sqrt{(k+1)(\sigma_{k+1}^2 + \ldots + \sigma_m^2)}$. It can be seen that both variants of our algorithm stay below the upper bound, until it reaches the level of roundoff error. Interestingly, for the matrix $A_2$, which features the slowest singular value decay, the observed approximation error is much closer to the best approximation error than to the upper bound.
The right plots of the figures show, for different values of $k$, the ratio between the execution times of Algorithm~\ref{alg:CS} without early stopping and with early stopping. For the variant with early stopping, we also plot the number of columns that were examined. In the most optimistic scenario, only $k$ columns need to be examined, which means that in every step of the algorithm already the first verifies the desired criterion. The plots reveal that our algorithm actually stays pretty close to this ideal situation, at least for the matrices considered. Note that for values of $k$ larger than the numerical rank of the matrix, Algorithm~\ref{alg:CS} starts computing ratios~\eqref{eq:coeffCharPoly} from singular values of the order of machine precision. In turn, the computations are severely affected by roundoff error and it may, in fact, happen that the early stopping criterion is never satisfied. This leads to meaningless results and we therefore truncate the plots before this happens. A proper implementation of Algorithm~\ref{alg:CS} needs to detect such a situation and reduce $k$ accordingly.

\begin{figure}[ht]
    \centerline{\includegraphics[width=\textwidth]{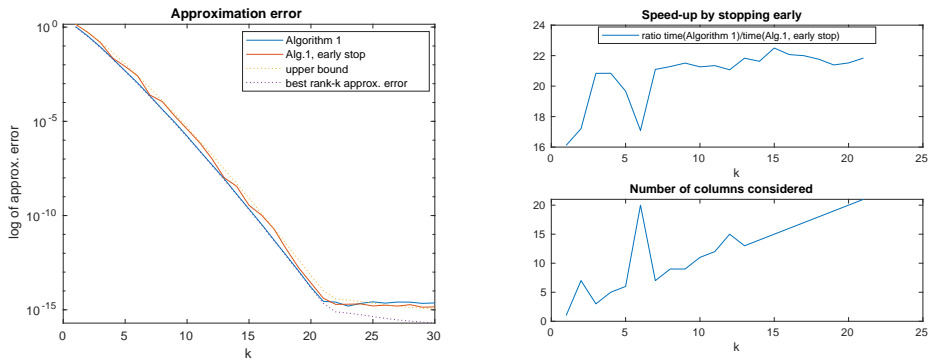}}
    \caption{Results for matrix $A_1$}
    \label{fig:hilbert200CS}
\end{figure}

\begin{figure}[ht]
    \centerline{\includegraphics[width=\textwidth]{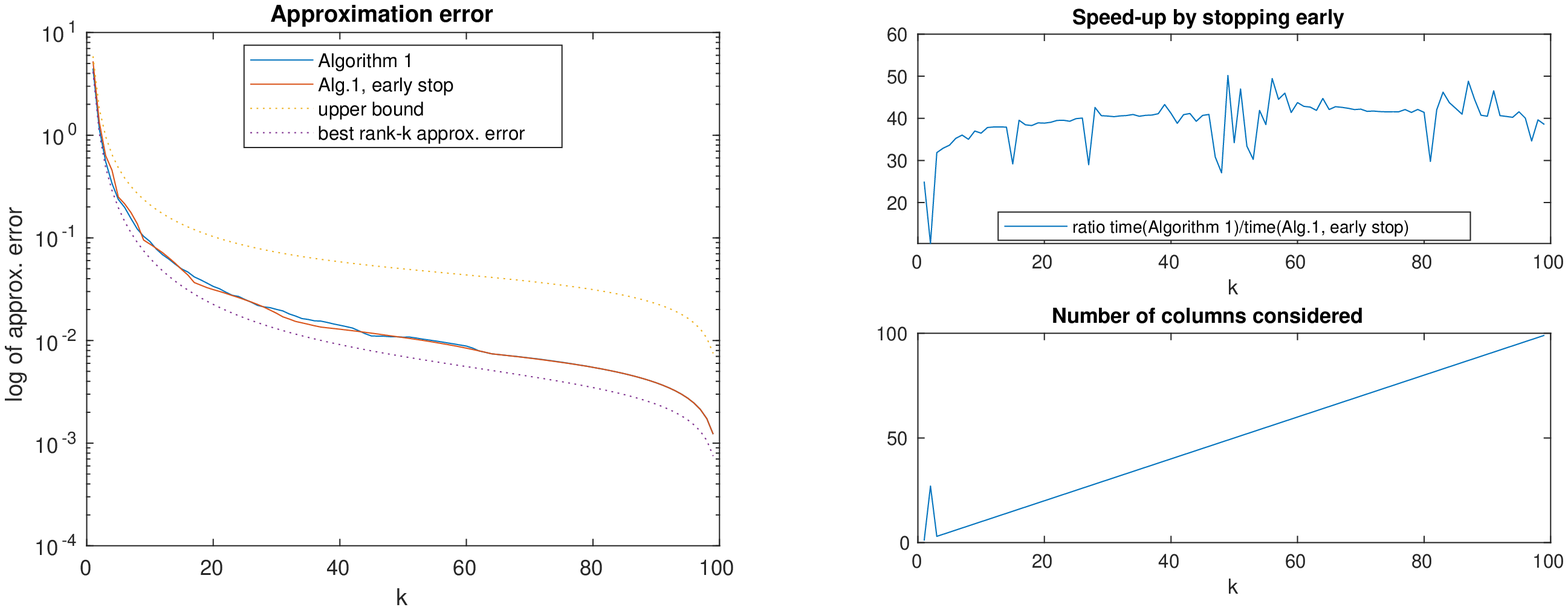}}
    \caption{Results for matrix $A_2$}
    \label{fig:exp100200CS}
\end{figure}

\begin{figure}[ht]
    \centerline{\includegraphics[width=\textwidth]{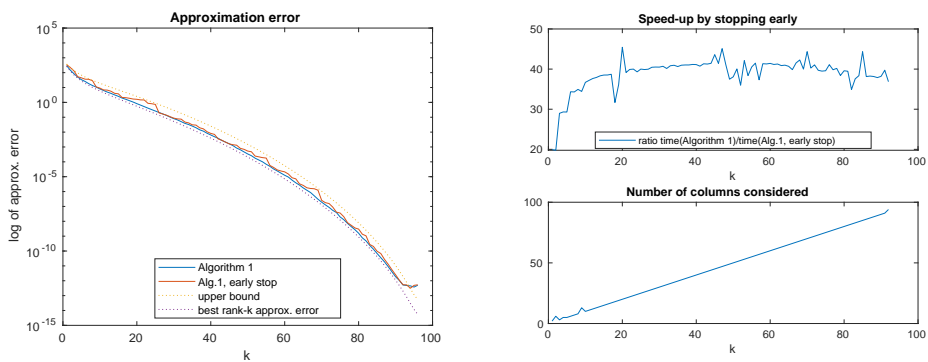}}
    \caption{Results for matrix $A_3$}
    \label{fig:p100200CS}
\end{figure}

\section{Matrix approximation} \label{sec:matrixapprox}

In this section, we extend the developments from Section~\ref{sec:colsel} on column subset selection to compute certain low-rank matrix approximations of a matrix $A\in \R^{m\times n}$ with $m\le n$. As already discussed in the introduction, we will pursue two ways. First, in Section~\ref{sec:cur}, we discuss a general CUR approximation obtained from applying column subset selection to the columns and rows of the matrix. Second, in Section~\ref{sec:cross}, we present a novel approach to cross approximation, a specific type of CUR approximation, with guaranteed error bounds.

%

\subsection{CUR approximation induced by column subset selection} \label{sec:cur}

Suppose that $C \in \R^{m\times k}$ and $R \in \R^{k\times n}$ have been chosen. Then the matrix $U\in\R^{k \times k}$ that minimizes $\| A - CUR \|_F$ is given by the projection $U = C^+ A R^+$, see~\cite[p. 320]{Stewart1999}. The following corollary provides an error bound for the case when $C$ and $R$ are determined by the techniques from Section~\ref{sec:colsel}, leading to  Algorithm~\ref{alg:MCS}. Closely related results can be found in the literature; see, for example,~\cite[Theorem 4]{Drineas2008},~\cite[Corollary 3.5]{Saibaba2016}, and~\cite[Theorem 4.1]{SorensenEmbree2016}. 
\begin{algorithm} \small 
    \begin{algorithmic}[1]\caption{Matrix approximation by column subset selection}\label{alg:MCS}
        \REQUIRE{$A \in \R^{m \times n}$, rank $k$}
        \ENSURE{Rank-$k$ CUR approximation, with $C,R$ containing columns and rows of $A$}
        \STATE{Compute $C$ by applying Algorithm \ref{alg:CS} to select $k$ columns of $A$}
        \STATE{Compute $R$ by applying Algorithm \ref{alg:CS} to select $k$ columns of $A^T$}
        \STATE{Compute $U = C^{+}AR^{+}$}
    \end{algorithmic}
\end{algorithm}
\begin{corollary}\label{cor:CUR}
    Let $A \in \R^{m \times n}$, with $1 \le k \le m \le n$. Then the CUR approximation returned by Algorithm~\ref{alg:MCS} satisfies
    \begin{equation*}
        \| A - CUR \|_F \le \sqrt{2k+2} \sqrt{ \sigma_{k+1}(A)^2 + \cdots  + \sigma_{m}(A)^2  }.
    \end{equation*}
\end{corollary}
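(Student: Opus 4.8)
The plan is to reduce the statement to two applications of the column subset selection bound~\eqref{eq:CS}, one to $A$ and one to $A^T$, and then to combine them through a Pythagorean splitting of the error in the Frobenius inner product. First I would make the geometry explicit. With the output $C,R$ of Algorithm~\ref{alg:MCS} and the choice~\eqref{eq:cur}, the approximant factors as
\[
CUR = C C^+ A R^+ R = P_C\, A\, P_R,
\]
where $P_C = C C^+$ and $P_R = R^+ R$ are the orthogonal projectors onto the column space of $C$ and the row space of $R$, respectively. Thus the task reduces to bounding $\| A - P_C A P_R \|_F$.

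Next I would record the two one-sided guarantees. Applying Algorithm~\ref{alg:CS} to $A$ and invoking~\eqref{eq:CS} gives $\| (I - P_C) A \|_F^2 \le (k+1)(\sigma_{k+1}^2 + \cdots + \sigma_m^2)$. Applying the same algorithm to $A^T$ selects $k$ columns of $A^T$, i.e.\ the rows forming $R$; transposing the corresponding instance of~\eqref{eq:CS} and using $\sigma_i(A^T) = \sigma_i(A)$ yields $\| A (I - P_R) \|_F^2 \le (k+1)(\sigma_{k+1}^2 + \cdots + \sigma_m^2)$. The only point requiring care here is to translate the bound stated for columns of $A^T$ into a statement about the row-space projector $P_R = R^+ R = R^T (R^T)^+$ of $A$; this is immediate once one notes that transposition preserves both the Frobenius norm and the singular values.

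The core of the argument is the algebraic identity
\[
A - P_C A P_R = (I - P_C) A + P_C A (I - P_R).
\]
Because $P_C$ is a symmetric idempotent, $(I - P_C) P_C = 0$, so the two summands are orthogonal with respect to the Frobenius inner product; a short computation giving $\mathrm{tr}\big( A^T (I - P_C) P_C A (I - P_R) \big) = 0$ confirms this. Hence
\[
\| A - P_C A P_R \|_F^2 = \| (I - P_C) A \|_F^2 + \| P_C A (I - P_R) \|_F^2 .
\]
For the second term I would use that an orthogonal projector is a contraction in the Frobenius norm, $\| P_C M \|_F \le \| M \|_F$, so that $\| P_C A (I - P_R) \|_F^2 \le \| A (I - P_R) \|_F^2$.

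Combining this Pythagorean identity with the two one-sided bounds then gives $\| A - P_C A P_R \|_F^2 \le 2(k+1)(\sigma_{k+1}^2 + \cdots + \sigma_m^2)$, and taking square roots produces the factor $\sqrt{2k+2}$ claimed in the corollary. I do not expect a genuine obstacle: the whole proof hinges on recognizing the right additive splitting and verifying Frobenius-orthogonality, after which everything is routine. The one place to stay alert is the bookkeeping when transferring the column selection guarantee from $A^T$ back to a row-space projector of $A$, and the fact that the projector $P_C$ in the second summand may be discarded only \emph{after} the orthogonal splitting, since $\| A - P_C A P_R \|_F$ does not decompose directly.
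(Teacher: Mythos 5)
Your proposal is correct and follows essentially the same route as the paper: the identity $A - P_CAP_R = (I-P_C)A + P_CA(I-P_R)$, the Pythagorean splitting in the Frobenius norm, the contraction property of $P_C$, and two applications of~\eqref{eq:CS} to $A$ and $A^T$. The paper's proof is a four-line version of exactly this argument.
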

\begin{proof}
    Using the inequality~\eqref{eq:CS} twice and the fact that $CC^{+}$ is an orthogonal projection, we obtain
    \begin{align*}
        \| A - CUR \|_F^2 & = \| A - CC^{+}AR^{+}R \|_F^2 \\
        & = \| A - CC^{+} A \|_F^2 + \| CC^{+}(A - AR^{+}R) \|_F^2 \\
        & \le \| (I - CC^{+})A \|_F^2 + \| A(I - R^{+}R) \|_F^2 \\
        & \le 2(k+1) \left ( \sigma_{k+1}(A)^2 + \cdots  + \sigma_{m}(A)^2 \right ).
    \end{align*}
\end{proof}

\subsubsection{Numerical experiments}

We have tested a Matlab implementation of Algorithm~\ref{alg:MCS} in the setting and for the matrices $A_1,A_2,A_3$ described in Section~\ref{sec:numexpCS}. Figure~\ref{fig:plotsCSM} displays the obtained approximation errors $\| A_i - CUR \|_F$ for different values of $k$. Again, we have tested both variants of Algorithm~\ref{alg:CS}, without and with early stopping, within Algorithm~\ref{alg:MCS}. The speedups obtained from early stopping are very similar to the ones reported Section~\ref{sec:numexpCS} and, therefore, we refrain from providing details.

\begin{figure}[ht]
    \centerline{\includegraphics[width=\textwidth]{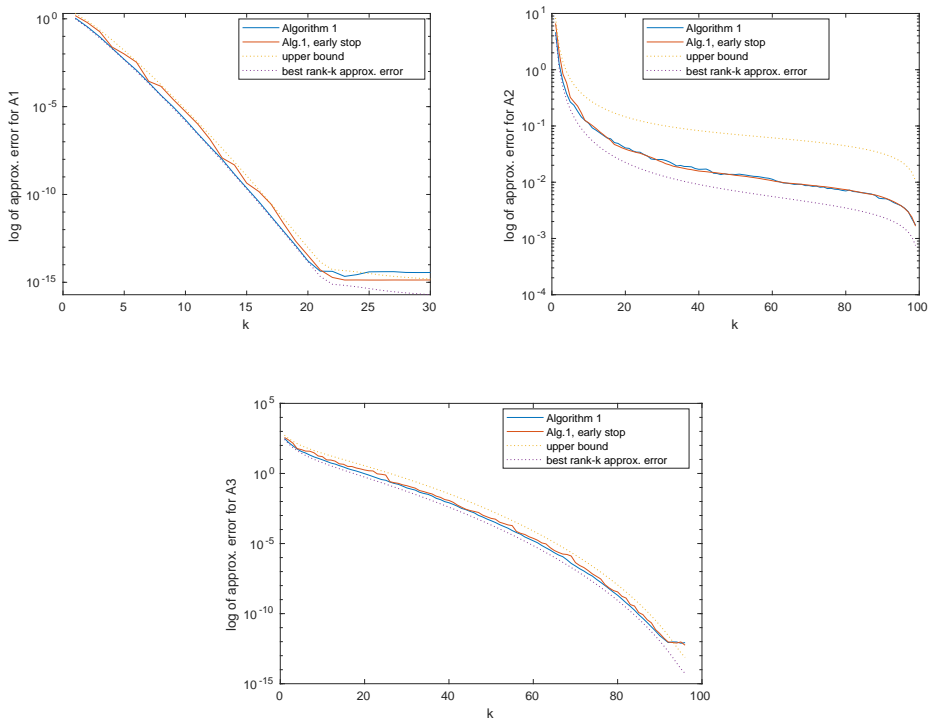}}
    \caption{Approximation errors for matrices $A_1$ (top left), $A_2$ (top right), and $A_3$ (bottom).}
    \label{fig:plotsCSM}
\end{figure}

We also consider, for $0<\alpha < 1$, the $n \times n$ matrix  
\begin{equation*}
A = Q\cdot \text{diag}(1, \alpha, \alpha^2, \ldots, \alpha^{n-1}) \cdot Q^T,
\end{equation*}
where $Q \in \R^{n \times n}$ is determined as the orthogonal factor from the QR decomposition of \[
\begin{bmatrix} 1 & & & & \\ -1 & 1 & & & \\ -1 & -1 & 1 & & \\ \vdots & \vdots & & \ddots & \\ -1 & -1 & -1 & \cdots & 1 \end{bmatrix}.
\]
This is known to be a challenging example for the CUR approximation induced by DEIM (discrete interpolation method); see~\cite[Section 4.2]{SorensenEmbree2016}, which determines the row and column indices by greedily choosing a maximum volume submatrix of $U_k$ and $V_k$ containing the first $k$ left and right singular vectors of $A$, respectively. For the example above, the DEIM induced CUR approximation always chooses $1,\ldots,k$ for the column and row indices. For
$\alpha=0.1$, $n=6$, $k=5$, the error resulting from this choice is given by
\begin{equation*}
    \| A - A(:,1:5)A(:,1:5)^+AA(1:5,:)^+ A(1:5,:) \|_F \approx 2.6 \cdot 10^{-9},
\end{equation*}
which is a magnitude larger than the upper bound $\sqrt{2(k+1)} \sigma_6\approx 3.5 \cdot 10^{-10}$ guaranteed by Algorithm~\ref{alg:MCS}. Note that the latter algorithm selects the last $5$ rows and columns for this example, leading to an error of $\approx 1.3 \cdot 10^{-10}$.

\subsection{Cross approximation} \label{sec:cross}

We now consider \emph{cross approximations}, which take the form
\begin{equation} \label{eq:crossapproximation}
    A\approx A(:,J) A(I,J)^{-1} A(I,:)
\end{equation}
for row/column index tuples \[ (I,J) \in \Omega:= \{1, \ldots, m\}^k \times \{ 1, \ldots, n\}^k. \]
The different choice of the middle matrix makes a fundamental difference. In particular, as the following example shows, choosing the indices $I,J$ as in Algorithm~\ref{alg:MCS} may lead to poor approximation error.
\begin{example}
Consider $A = \begin{bmatrix} 2\varepsilon & 1 \\ 1 & \varepsilon \end{bmatrix}$ for $\varepsilon > 0$ and $k = 1$. Clearly, the first column and row satisfy the bound~\eqref{eq:CS} for $k=1$ with respect to $A$ and $A^T$, respectively. However, the error of the corresponding cross approximation, $\|A - A(:,1)A(1,1)^{-1}A(1,:)\|_F =\frac{1}{2\varepsilon} - \varepsilon $, becomes arbitrarily large as $\varepsilon \to 0$. 
\end{example}

Zamarashkin and Osinsky~\cite{Zamarashkin2018} have shown the existence of a cross approximation that satisfies a polynomial error bound in the Frobenius norm. To summarize their result, let 
\begin{equation*}
(X, Y) = (X_1, \ldots, X_k,Y_1, \ldots, Y_k)
\end{equation*}
be a $(2k)$-tuple of random variables with values in $\Omega$ such that
\begin{equation}\label{eq:VolSamplingACA}
    \mathbb{P}\left ( X = I, Y = J \right) := 
                                    \frac{\Vol\left(A( I,J)\right)^2}{\sum_{(I', J') \in \Omega} \Vol(A(I',J'))^2}.
\end{equation}
Note that $\Vol\left(A(I,J)\right) = 0$ whenever $i_1,\ldots, i_k$ or $j_1, \ldots, j_k$ contain repeated indices. Then~\cite[Theorem 1]{Zamarashkin2018} shows that
\begin{equation}\label{eq:ExpectedACA}
    \mathbb{E}[ \| A - A(:,Y)A(X,Y)^{-1}A(X,:) \|_F^2 ] \le (k+1)^2 \left (\sigma_{k+1}^2 + \ldots + \sigma_m^2\right ).
\end{equation}
In particular, this implies that there exists $(I, J) \in \Omega$ such that
\begin{equation}\label{eq:ACAfrob}
    \| A - A(:,J)A(I,J)^{-1}A(I,:) \|_F^2 \le (k+1)^2\left(\sigma_{k+1}^2 + \ldots + \sigma_m^2 \right ).
\end{equation}
 
In analogy to Section~\ref{sec:detAlgorithm} and~\cite{Deshpande2010}, we will now derandomize this result producing a polynomial-time deterministic algorithm that returns a cross approximation satisfying~\eqref{eq:ACAfrob}. The key for doing so is to find an expression for the conditional expectations that is easy to work with.

\subsubsection{Conditional expectations}
 
 \begin{lemma}\label{lem:marginalsACA}
    Let $1\le t \le k$ and $(i_1, \ldots, i_t, j_1, \ldots, j_t)$ be such that 
    \begin{equation*}
    \mathbb{P}\left ( \scriptstyle{X_1=i_1, \ldots, X_t = i_t \atop Y_1 = j_1, \ldots, Y_t = j_t} \right)  > 0
    \end{equation*}
    for a random $(2k)$-tuple $(X, Y)$ with the probability distribution defined by~\eqref{eq:VolSamplingACA}. Consider
    \begin{equation*}
        B = A - A(:,\small{\begin{bmatrix} j_1 & \cdots & j_t \end{bmatrix}}) A(\begin{bmatrix} i_1 & \cdots & i_t \end{bmatrix}, \begin{bmatrix} j_1 & \cdots & j_t \end{bmatrix})^{-1} A(\begin{bmatrix} i_1 & \cdots & i_t \end{bmatrix},:),
    \end{equation*}
    the remainder of cross approximation after choosing row indices $i_1, \ldots, i_t$ and column indices $j_1, \ldots, j_t$. Then
    \begin{align*}
        &\mathbb{E}\left [  \| A - A(:,Y)A(X,Y)^{-1}A(X,:)\|_F^2 \big| \scriptstyle{{X_1=i_1,\ldots,X_t=i_t \atop Y_1=j_1,\ldots,Y_t=j_t } }\right ]\\
        & = (k-t+1)^2 \cdot \frac{c_{m-k+t-1}(BB^T)}{c_{m-k+t}(BB^T)},
    \end{align*}
    with the coefficients $c_{m-k+t},c_{m-k+t-1}$ defined as in~\eqref{eq:coeffCharPoly}.
 \end{lemma}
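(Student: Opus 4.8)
The plan is to reduce the conditional expectation to an \emph{unconditional} expected cross-approximation error for the residual $B$ under volume sampling, and then to evaluate that quantity exactly using determinantal identities.

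First, I would analyze the conditional law of the remaining indices. Write $I_0=(i_1,\dots,i_t)$, $J_0=(j_1,\dots,j_t)$ for the already-fixed indices and let $I_1,J_1$ denote the remaining $(k-t)$ row and column indices, so that $I=[I_0,I_1]$ and $J=[J_0,J_1]$. Under \eqref{eq:VolSamplingACA} the conditional probability of $(I_1,J_1)$ is proportional to $\det(A([I_0,I_1],[J_0,J_1]))^2$, and the Schur-complement determinant identity
\[
\det\!\big(A([I_0,I_1],[J_0,J_1])\big)=\det\!\big(A(I_0,J_0)\big)\cdot\det\!\big(B(I_1,J_1)\big)
\]
lets the constant factor $\det(A(I_0,J_0))^2$ cancel. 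Hence the conditional law of $(I_1,J_1)$ is exactly volume sampling of $(k-t)$ rows and columns applied to $B$; note that indices meeting $I_0$ or $J_0$ create a zero row or column of $B$ and contribute nothing, as they should. Combined with the nesting (Haynsworth quotient) property
\[
A-A(:,J)A(I,J)^{-1}A(I,:)=B-B(:,J_1)B(I_1,J_1)^{-1}B(I_1,:),
\]
this shows that the conditional expectation equals the unconditional expected cross-approximation error of $B$ for the reduced rank $\ell:=k-t$.

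It then remains to compute that expectation in closed form. The crucial tool is the entrywise determinant formula for the cross-approximation residual,
\[
\big(B-B(:,J_1)B(I_1,J_1)^{-1}B(I_1,:)\big)_{pq}=\frac{\det\!\big(B([I_1,p],[J_1,q])\big)}{\det\!\big(B(I_1,J_1)\big)},
\]
which, after squaring and summing over all $(p,q)$, turns $\det(B(I_1,J_1))^2\,\|\cdot\|_F^2$ into $\sum_{p,q}\det(B([I_1,p],[J_1,q]))^2$. Summing over $(I_1,J_1)$ and counting how often each $(\ell+1)$-element row/column pair appears — once for each of the $\ell+1$ choices of the appended row and each of the $\ell+1$ choices of the appended column — produces the factor $(\ell+1)^2=(k-t+1)^2$, exactly the square that distinguishes this case from the linear factor appearing in column subset selection. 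Finally, the Cauchy--Binet identity
\[
\sum_{|I'|=s,\,|J'|=s}\det(B(I',J'))^2=\sum_{|I'|=s}\det\!\big((BB^T)(I',I')\big)=e_s(\sigma_1(B)^2,\dots,\sigma_m(B)^2)=c_{m-s}(BB^T)
\]
identifies the numerator ($s=\ell+1$) with $c_{m-k+t-1}(BB^T)$ and the denominator ($s=\ell$) with $c_{m-k+t}(BB^T)$, yielding the claimed expression.

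I expect the main obstacle to be justifying the two transitivity statements for cross approximation — the determinant factorization and, more delicately, the residual nesting identity asserting that the remainder after the full index set coincides with the remainder of $B$ after only the new indices. Both are instances of the quotient property of Schur complements, but they must be verified carefully, and throughout one has to keep track of the fact that repeated indices yield zero volume, so that the sums over ordered tuples in \eqref{eq:VolSamplingACA} and the sums over subsets used in Cauchy--Binet differ only by factorial factors that cancel in the ratio.
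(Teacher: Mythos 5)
Your proposal is correct and follows essentially the same route as the paper: both arguments rest on the Schur-complement factorization $\det A(I,J)=\det A(I_0,J_0)\,\det B(I_1,J_1)$, the expression of the cross-approximation error as a ratio of sums of squared volumes, and the Cauchy--Binet identification of sums of squared minors with characteristic-polynomial coefficients. The only difference is packaging: you first transfer both the conditional law and the residual to $B$ (via the quotient property of Schur complements) and then derive the counting identity by hand, whereas the paper keeps the volumes of $A$ throughout and cites \cite{Zamarashkin2018} and \cite{Knill2014} for the two identities you prove from scratch.
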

 
 \begin{proof}
    To simplify notation, we let $I_1 = (i_1, \ldots , i_t)$, $I_2 = (i_{t+1}, \ldots, i_k)$, $I = (I_1, I_2) = (i_1,\ldots,i_k)$ and define $J_1, J_2, J$ analogously. In the following, we always use the convention that row and column summation indices range from $1$ to $m$ and from $1$ to $n$, respectively.
    We have that
    \begin{align}
        &\mathbb{E}\left [  \| A - A(:,Y) A(X,Y)^{-1} A(X,:) \|_F^2 \big| { \scriptstyle { X_1=i_1,\ldots,X_t=i_t \atop Y_1=j_1,\ldots,Y_t=j_t}} \right ] \nonumber \\
        & = \sum_{i_{t+1}, \ldots, i_k \atop j_{t+1}, \ldots, j_k} \| A - A(:,J) A(I,J)^{-1} A(I,:) \|_F^2 \cdot \mathbb{P}\left(X = I, Y=J \big| { \scriptstyle { X_1=i_1,\ldots,X_t=i_t \atop Y_1=j_1,\ldots,Y_t=j_t}}\right) \nonumber \\
        & = \frac{1}{\gamma} \cdot \sum_{i_{t+1}, \ldots, i_k, i_{k+1} \atop j_{t+1}, \ldots, j_k, j_{k+1}} 
         \Vol \big(A((I, i_{k+1}), (J,j_{k+1})) \big)^2, \label{eq:lastline}
    \end{align}
    with 
    \begin{equation*}
        \gamma = \sum_{i_{t+1}, \ldots, i_k \atop j_{t+1}, \ldots, j_k} \Vol(A(I,J))^2.
    \end{equation*}
    For establishing the equality in~\eqref{eq:lastline} we used from~\cite[Lemma 1]{Zamarashkin2018} that
    \begin{equation*}
        \| A - A(:,J) A(I,J)^{-1} A(I,:) \|_F^2 = \frac{\sum_{i_{k+1}, j_{k+1}} \!\!\!{\Vol\big(A((I, i_{k+1}), (J,j_{k+1})) \big)^2}} {\Vol(A(\I,\J))^2},
    \end{equation*}
    and, from~\eqref{eq:VolSamplingACA}, that
    \begin{equation*}
        \mathbb{P}\left (X = I, Y=J \big | { \scriptstyle { X_1=i_1,\ldots,X_t=i_t \atop Y_1=j_1,\ldots,Y_t=j_t}}\right ) = \frac{\mathbb{P}(X=I, Y=J)}{\mathbb{P}\left(\scriptstyle{X_1=i_1,\ldots,X_t=i_t \atop Y_1=j_1,\ldots,Y_t=j_t}\right )} =  \frac{1}{\gamma} \cdot \Vol(A(\I,\J))^2.
    \end{equation*}
    
    We now aim at simplifying the expression~\eqref{eq:lastline}. For this purpose, we assume without loss of generality that $i_1 = 1,\ldots,i_t = t$ and $j_1 = 1,\ldots,j_t = t$. This allows us to partition
    \[
     A(I,J) = \begin{bmatrix}
                 A(I_1,J_1) & A(I_1, J_2) \\
                 A(I_2, J_1) & A(I_2,J_2)
                \end{bmatrix}, \quad 
B(I,J) = \begin{bmatrix}
                 0 & 0 \\
                 0 & B(I_2,J_2)
                \end{bmatrix},
    \]
    where $B(I_2,J_2) = A(I_2,J_2) - A(I_2, J_1) A(I_1,J_1)^{-1} A(I_1, J_2)$ by the definition of $B$. By the relation between determinants and Schur complements~\cite[Equation (0.8.5.1)]{HornJohnson2013},
    $\Vol( A(I,J) ) = \Vol( A(I_1,J_1) ) \cdot \Vol( B(I_2,J_2) )$. Therefore,
    \begin{equation*}
        \gamma = \sum_{i_{t+1}, \ldots, i_k \atop j_{t+1}, \ldots, j_k} \Vol(A(\I,\J))^2 = \sum_{i_{t+1}, \ldots, i_k \atop j_{t+1}, \ldots, j_k} \Vol(B(\I_2,\J_2))^2 \cdot \Vol(A(I_1, J_1))^2.
    \end{equation*}
    Analogously, one shows
    \begin{multline*}
        \sum_{i_{t+1}, \ldots, i_{k+1} \atop j_{t+1}, \ldots, j_{k+1}} \Vol\big(A((I,i_{k+1}),(J,j_{k+1}))\big)^2  \\ = 
        \sum_{i_{t+1}, \ldots, i_{k+1} \atop j_{t+1}, \ldots, j_{k+1}}
        \Vol\big(B((I_2, i_{k+1}), (J_2, j_{k+1}))\big)^2 \Vol(A(I_1, J_1))^2.
    \end{multline*}   
    Inserting these expressions into~\eqref{eq:lastline} yields
    \begin{equation*}
       \frac{ \sum_{i_{t+1}, \ldots, i_{k+1} \atop j_{t+1}, \ldots, j_{k+1}} \Vol\big(B((I_2, i_{k+1}), (J_2, j_{k+1}))\big)^2}{ \sum_{i_{t+1}, \ldots, i_k \atop j_{t+1}, \ldots, j_k} \Vol(B(\I_2,\J_2))^2}.
    \end{equation*}
    By~\cite[Theorem 7]{Knill2014} this ratio is equal to 
    \begin{equation*}
        \frac{ c_{m-k+t-1}(BB^T) \cdot \left ( (k-t+1)! \right )^2}{c_{m-k+t}(BB^T) \cdot \left ( (k-t)! \right )^2} =  (k-t+1)^2 \cdot \frac{c_{m-k+t-1}(BB^T)}{c_{m-k+t}(BB^T)}.\qedhere
    \end{equation*}
 \end{proof}

 \subsubsection{Derandomized cross approximation algorithm}
With Lemma~\ref{lem:marginalsACA} at hand, we can proceed analogously to Section~\ref{sec:detAlgorithm} and sequentially find $k$ pairs of row/column indices such that~\eqref{eq:ACAfrob} is satisfied. Suppose that $t-1$ index pairs $(i_1,j_1),\ldots,(i_{t-1},j_{t-1})$ have been determined. Then the $t$th step of the algorithm proceeds by choosing $(i_t, j_t)$ such that 
\begin{equation}\label{eq:choiceitjt}
    \mathbb{E}\left [ \| A-A(:,Y)A(X,Y)^{-1} A(X,:) \|_F^2 \big | \scriptstyle{X_1=i_1, \ldots, X_t=i_t \atop Y_1 = j_1, \ldots, Y_t = j_t}\right ]
\end{equation}
is minimized. We will show in Theorem~\ref{thm:ACAworks} below that this choice of index pairs leads to a cross approximation satisfying the desired error bound~\eqref{eq:ACAfrob}.
In view of Lemma~\ref{lem:marginalsACA}, the minimization of~\eqref{eq:choiceitjt} means that in each step of the algorithm we need to compute the ratios
 \begin{equation}\label{eq:wantedRatiosACA}
    \frac{c_{m-k+t-1}(C_{ij} C_{ij}^T)}{c_{m-k+t}(C_{ij} C_{ij}^T)}, \quad i = 1,\ldots, m, \quad j = 1,\ldots,n,
 \end{equation}
where 
\begin{equation*}
C_{ij} = A - A(:,\small{[j_1,\cdots,j_{t-1}, j]}) A([i_1,\cdots,i_{t-1},i ], [j_1,\cdots,j_{t-1}, j])^{-1} A([ i_1, \cdots,i_{t-1}, i],:).
\end{equation*}
 
Parallelizing the developments in Section~\ref{sec:charpoly}, we now show how the coefficients in~\eqref{eq:wantedRatiosACA} can be computed via updating the singular values of $C_{ij}$. Let us denote the remainder from the previous step by
 \begin{equation*}
        B = A - A(:,\small{[j_1,\cdots,j_{t-1}]}) A([i_1,\cdots,i_{t-1} ], [j_1,\cdots,j_{t-1}])^{-1} A([ i_1, \cdots,i_{t-1}],:).
 \end{equation*}
Then it follows that \begin{equation} \label{eq:cij} C_{ij} = B - \frac{1}{B(i,j)} B(:,j) B(i,:),\end{equation} see, e.g.,~\cite{Bebendorf2000}. We compute a thin SVD $B = U \Sigma V^T$ such that $U\in \R^{m\times m}$, $V\in \R^{m \times n}$ have orthonormal columns and $\Sigma \in \R^{m\times m}$ is diagonal.
 Note that
 \begin{equation*}
    B(:,j) = U\Sigma V(j,:)^T, \qquad B(i,:) = U(i,:) \Sigma V^T.
 \end{equation*}
 Inserted into~\eqref{eq:cij}, this shows that the nonzero singular values of $C_{ij}$ match the singular values of
 \begin{equation*}
    U^T C_{ij} V = \Sigma - \Sigma V(j,:)^T \cdot \frac{U(i,:) \Sigma}{B(i,j)} = \Sigma - x y^T,
 \end{equation*}
 where $x = \Sigma V(j,:)^T$ and $y = \frac{1}{B(i,j)}\Sigma U(i,:)^T$ are vectors of length $m$ and can be computed with $O(m^2)$ operations. 
 
 Similarly as in Section~\ref{sec:charpoly}, we transform $\Sigma - xy^T$ into bidiagonal form, after which its singular values can be computed with $O(m^2)$ operations. This  transformation proceeds in three steps:
 \begin{enumerate}
 \item We compute orthogonal matrices $Q$ and $W$ such that $Q^T \Sigma W$ is upper bidiagonal and $Q^T x = \pm \| x \|_2 \cdot e_1$ using, for example,~\cite[Algorithm 3.4]{Yoon1996}. In turn, the matrix 
 \begin{equation}\label{eq:bidiagfirstrow}
 D_1 := Q^T (\Sigma - xy^T)W
 \end{equation}
 is bidiagonal with an additional nonzero first row; see the first plot in Figure~\ref{fig:chasing} for an illustration.
 \item By a bulge chasing algorithm, we transform $D_1$ to an upper banded matrix $D_2$ with two superdiagonals using $O(m^2)$ Givens rotations. We refrain from giving a detailed description of the algorithm and refer to Figure~\ref{fig:chasing} for an illustration. 
 \item The banded matrix $D_2$ is reduced to a bidiagonal matrix $D_3$ using the LAPACK~\cite{Anderson1999} routine {\tt dgbbrd}.
  \end{enumerate}
 The overall procedure described above can be implemended by means of $O(m^2)$ Givens rotations, each of which is applied to a small matrix of size independent of $m,n$. Hence, it has complexity $O(m^2)$.
 
 \begin{figure}[ht]
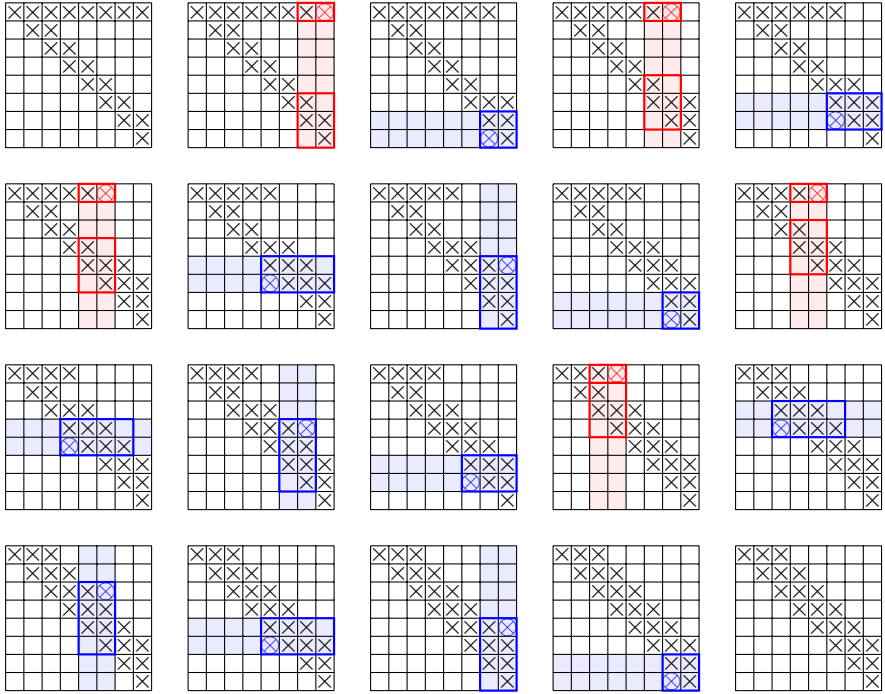

    \begin{center}
        \caption{Illustration of bulge chasing algorithm to transform a bidiagonal matrix with an additional nonzero first row to an upper banded matrix. In each plot, except for the first and last ones, a Givens rotation is applied to a pair of row or columns to zero out the entry denoted by $\otimes$.}
        \label{fig:chasing}
        \include{chasing}
    \end{center}
 \end{figure}

 Algorithm~\ref{alg:derandomizedACA} summarizes our newly proposed method for cross approximation. The SVD needed at the beginning of each outer loop is of complexity $O(m^2n)$ and each of the $mn$ inner loops costs $O(m^2)$ operation; the total complexity of Algorithm~\ref{alg:derandomizedACA} is therefore $O(km^3n)$. 
 \begin{algorithm} \small
    \begin{algorithmic}[1]\caption{Derandomized cross approximation}
        \label{alg:derandomizedACA}
        \REQUIRE{$A\in\R^{m \times n}$ with $m\le n$, integer $k \le m$}
        \ENSURE{Index sets $I,J$ of cardinality $k$ defining the cross approximation~\eqref{eq:crossapproximation}}
        \STATE{Initialize $I \leftarrow \emptyset, J \leftarrow \emptyset$, and $ B \leftarrow A$}
        \FOR{$t = 1,\ldots, k$}
            \STATE{$[U, \Sigma, V] =$ thin SVD of $B$} 
            \STATE{$\minRatio = +\infty$}
            \FOR{$i = 1, \ldots, m$}
                \FOR{$j = 1, \ldots, n$}
                    \STATE{$x \leftarrow \Sigma V(j,:)^T $, $y \leftarrow \frac{1}{B(i,j)} \Sigma U(i,:)^T$}
                    \STATE{\label{line:mex1} Compute matrix $D_1$ defined in~\eqref{eq:bidiagfirstrow} using~\cite[Algorithm 3.4]{Yoon1996} }
                    \STATE{\label{line:mex2} Transform $D_1$ into upper banded form $D_2$ using bulge chasing algorithm}
                    \STATE{Transform $D_2$ into bidiagonal matrix $D_3$ using LAPACK's {\tt dgbbrd}}
                    \STATE{Compute singular values $\sigma_1, \ldots, \sigma_m$ of $D_3$}
                    \STATE{Apply Summation Algorithm~\cite[Algorithm 1]{RehmanIpsen2011}} to obtain $c_{m-k+t-1}(C_{ij} C_{ij}^T)$ and $c_{m-k+t}(C_{ij} C_{ij}^T)$ from eigenvalues $\sigma^2_1, \ldots, \sigma^2_m$
                    \STATE{Set $r = \frac{c_{m-k+t-1}(C_{ij} C_{ij}^T)}{c_{m-k+t}(C_{ij} C_{ij}^T)}$}
                    \STATE {\bf if } $r <  \minRatio$ {\bf then } $\newRow \leftarrow i$, $\newCol \leftarrow j$, $\minRatio = r$ {\bf end if}
                \ENDFOR
            \ENDFOR
            \STATE{$I \leftarrow I \cup \{\newRow\}, J \leftarrow J \cup \{\newCol\}$}
            \STATE{$B \leftarrow B - \frac{B(:,\newCol)\cdot B(\newRow,:)}{B(\newRow,\newCol)}$}
        \ENDFOR
    \end{algorithmic}
 \end{algorithm}
 
  \begin{theorem}\label{thm:ACAworks}
 For a matrix $A$ of rank at least $k$, Algorithm~\ref{alg:derandomizedACA} returns index sets $I$ and $J$ such that~\eqref{eq:ACAfrob} is satisfied.
 \end{theorem}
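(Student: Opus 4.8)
The plan is to recognize Algorithm~\ref{alg:derandomizedACA} as precisely the method of conditional expectations applied to the volume-sampling distribution~\eqref{eq:VolSamplingACA}, and to show that this derandomization preserves the bound~\eqref{eq:ExpectedACA} at every step. Write $\Delta := \sigma_{k+1}^2 + \cdots + \sigma_m^2$ and, for a partial selection of $t$ index pairs $(i_1,j_1),\ldots,(i_t,j_t)$ of positive probability, denote by
\[
 E_t := \mathbb{E}\!\left[\,\|A - A(:,Y)A(X,Y)^{-1}A(X,:)\|_F^2 \,\big|\, {\scriptstyle X_1=i_1,\ldots,X_t=i_t \atop Y_1=j_1,\ldots,Y_t=j_t}\right]
\]
the corresponding conditional expectation. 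The argument is an induction on $t$ establishing the invariant: after $t$ steps the algorithm has produced a tuple of positive probability with $E_t \le (k+1)^2\Delta$.

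First I would set up the base case and the inductive step. The base case $t=0$ (empty selection) is exactly~\eqref{eq:ExpectedACA}. For the step, suppose the invariant holds after $t-1$ steps. By the law of total expectation,
\[
 E_{t-1} \;=\; \sum_{(i,j)} \mathbb{P}\!\left(X_t=i,\,Y_t=j \,\big|\, {\scriptstyle X_1=i_1,\ldots \atop Y_1=j_1,\ldots}\right) E_t(i,j),
\]
so $E_{t-1}$ is a convex combination of the conditional expectations $E_t(i,j)$ taken over the pairs $(i,j)$ of positive conditional probability. Hence the minimizer over such pairs satisfies $\min_{(i,j)} E_t(i,j) \le E_{t-1} \le (k+1)^2\Delta$. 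It then remains to check that the algorithm selects this minimizer: by Lemma~\ref{lem:marginalsACA} we have $E_t(i,j) = (k-t+1)^2 \cdot c_{m-k+t-1}(C_{ij}C_{ij}^T)/c_{m-k+t}(C_{ij}C_{ij}^T)$, and since the factor $(k-t+1)^2$ is independent of $(i,j)$, minimizing the ratio $r$ in the inner loop is equivalent to minimizing $E_t(i,j)$. Thus the chosen pair $(\newRow,\newCol)$ satisfies $E_t \le (k+1)^2\Delta$, closing the induction.

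Finally I would settle the argument at $t=k$. When $t=k$ the conditioning fixes the entire cross approximation, and the remainder $B$ in Lemma~\ref{lem:marginalsACA} is exactly $A - A(:,J)A(I,J)^{-1}A(I,:)$. Using the coefficient identities read off from~\eqref{eq:coeffCharPoly}, namely $c_m(BB^T)=1$ and $c_{m-1}(BB^T)=\mathrm{trace}(BB^T)=\|B\|_F^2$, the formula of Lemma~\ref{lem:marginalsACA} collapses to $E_k = 1^2\cdot c_{m-1}(BB^T)/c_m(BB^T) = \|B\|_F^2$, i.e.\ $E_k$ is the actual squared cross-approximation error. The invariant at $t=k$ therefore reads $\|A - A(:,J)A(I,J)^{-1}A(I,:)\|_F^2 \le (k+1)^2\Delta$, which is precisely~\eqref{eq:ACAfrob}.

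The main obstacle I anticipate is the bookkeeping needed to keep the induction well-posed: guaranteeing that at every step there is at least one extension $(i,j)$ of positive conditional probability (equivalently, a nonzero pivot $B(i,j)$, so that $C_{ij}$ and the ratio are well defined), and that the minimization is genuinely carried out over exactly this set. Positivity of at least one weight follows because the conditional probabilities sum to one whenever the partial tuple has positive probability; the hypothesis that $A$ has rank at least $k$ ensures the remainder $B$ stays nonzero through step $k$, so such a pivot always exists; and pairs with $B(i,j)=0$ correspond to zero volume, by the Schur-complement/volume relation used in the proof of Lemma~\ref{lem:marginalsACA}, and are automatically excluded from the minimization since they make the ratio undefined. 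Making these points precise while keeping the convex-combination step clean is the only delicate part; the remainder is a direct transcription of the Deshpande--Rademacher derandomization from Section~\ref{sec:detAlgorithm}.
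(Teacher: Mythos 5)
Your proposal is correct and follows essentially the same route as the paper: an induction on $t$ via the law of total expectation, showing that the minimizing pair preserves the bound $E_t \le (k+1)^2(\sigma_{k+1}^2+\cdots+\sigma_m^2)$ from~\eqref{eq:ExpectedACA}, with Lemma~\ref{lem:marginalsACA} identifying the minimization of the ratio $r$ with the minimization of the conditional expectation. Your additional remarks --- the explicit collapse of $E_k$ to $\|B\|_F^2$ via $c_m=1$, $c_{m-1}=\mathrm{trace}(BB^T)$, and the well-posedness of the search over pairs of positive conditional probability --- are correct details that the paper leaves implicit.
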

 \begin{proof}
 Let $B_{\{X,Y\}} = A-A(:,Y)A(X,Y)^{-1} A(X,:)$. For $t=1, \ldots, k$ we have that
\begin{align*}
        & \mathbb{E}\left [ \| B_{\{X,Y\}} \|_F^2 \big | \scriptstyle{   X_1=i_1, \ldots,X_{t-1} = i_{t-1} \atop Y_1 = j_1, \ldots, Y_{t-1} = j_{t-1}} \right ] \\
        &\!\!= \!\sum_{i, j} \mathbb{E} \left [ \| B_{\{X,Y\}} \|_F^2 \big | \scriptstyle{X_1 = i_1, \ldots, X_{t-1} = i_{t-1}, X_t = i \atop Y_1 = j_1, \ldots, Y_{t-1} = j_{t-1}, Y_{t} =j} \right ] \!\mathbb{P}\!\left (\!X_t = i, Y_t = j_t \big | \scriptstyle{ X_1 = i_1, \ldots, X_{t-1} = i_{t-1} \atop Y_1 = j_1, \ldots, Y_{t-1} = j_{t-1}}\right )\!.
    \end{align*}
 Therefore, as~\eqref{eq:ExpectedACA} holds, the choice~\eqref{eq:choiceitjt} inductively ensures that 
 \begin{align*}
    \mathbb{E}\left [ \| B_{\{X,Y\}} \|_F^2 \big | \scriptstyle{   X_1=i_1, \ldots,X_{t} = i_{t} \atop Y_1 = j_1, \ldots, Y_{t} = j_{t}} \right ] &
    \le  \mathbb{E}\left [ \| B_{\{X,Y\}} \|_F^2 \big | \scriptstyle{   X_1=i_1, \ldots,X_{t-1} = i_{t-1} \atop Y_1 = j_1, \ldots, Y_{t-1} = j_{t-1}} \right ] \\
    & \le (k+1)^2 (\sigma_{k+1}^2 + \ldots + \sigma_m^2).
 \end{align*}
Therefore, the index sets $I$ and $J$ computed by Algorithm~\ref{alg:derandomizedACA} satisfy the bound~\eqref{eq:ACAfrob}. \end{proof}

 In analogy to the discussion in Section~\ref{sec:stoppingearly}, let us emphasize that it is not necessary to select the pair $(i_t, j_t)$ that minimizes the ratio $r$. Any pair $(i,j)$ for which the inequality
 \begin{equation}\label{eq:relaxedACA}
    (k-t+1)^2\frac{c_{m-k+t-1}(C_{ij} C_{ij}^T)}{c_{m-k+t}(C_{ij} C_{ij}^T)} \le (k+1)^2 (\sigma_{k+1}(A)^2 + \ldots + \sigma_m(A)^2)
 \end{equation}
holds will lead to index sets $I$ and $J$ such that~\eqref{eq:ACAfrob} is satisfied. Inspired by adaptive cross approximation with full pivoting~\cite{Bebendorf2000}, we traverse the entries of $B$ from the largest to the smallest (in magnitude) and stop the search once we have found an index pair $(i_t, j_t$) satisfying~\eqref{eq:relaxedACA}. 

 \subsubsection{Numerical experiments}

 We have implemented both variants of Algorithm~\ref{alg:derandomizedACA}, without and with early stopping, in Matlab. Again, the two inner loops have been implemented in a C++ function that is called via a MEX interface. The computational environment is the one described in Section~\ref{sec:numexpCS} but the test matrices 
 are smaller because Algorithm~\ref{alg:derandomizedACA} without early stopping is significantly slower. We choose $A_1$ to be $100 \times 100$, $A_2$ to be $50 \times 100$, and the matrix $A_3\in\R^{50 \times 100}$ is given by $A_3(i,j) = \left ( \left ( \frac{i}{100}\right )^{10} + \left ( \frac{j}{100}\right )^{10} \right )^{1/10}$.
 
 The left plots of Figures~\ref{fig:hilbert100ACA},~\ref{fig:exp50100ACA},~\ref{fig:p50100ACA} display the  
 approximation error $\| A - A(:,J)A(I,J)^{-1}A(I,:)\|_F$ for the index sets returned by both variants of Algorithm~\ref{alg:derandomizedACA}. It can be seen that the approximation errors often stay close to the best rank-$k$ approximation error  $\sqrt{\sigma_{k+1}^2 + \ldots + \sigma_m^2}$ and do not exceed the upper bound~\eqref{eq:ACAfrob}, modulo roundoff error.
  However, for larger values of $k$, Algorithm~\ref{alg:derandomizedACA} without early stopping appears to encounter stability issues; the approximation error is distorted well above the level of roundoff error. This appears to be due to the fact that $A(I,J)$ becomes almost singular. For instance, Algorithm~\ref{alg:derandomizedACA} without early stopping applied to $A_2$ with $k = 48$ yields a matrix $A(I,J)$ with condition number $\approx 1.3 \cdot 10^{18}$. The variant with early stopping appears to lead to lower condition numbers and does not exhibit numerical instability for the matrices considered.
 The right plots of the figures display the ratios between the execution time of Algorithm~\ref{alg:derandomizedACA} without and with early stopping, as well as the total number of index pairs that needed to be tested in Algorithm~\ref{alg:derandomizedACA} with early stopping. It can be observed that early stopping dramatically accelerates the computation and is thus the preferred variant. 
 
 \begin{figure}[ht]
    \centerline{\includegraphics[width=\textwidth]{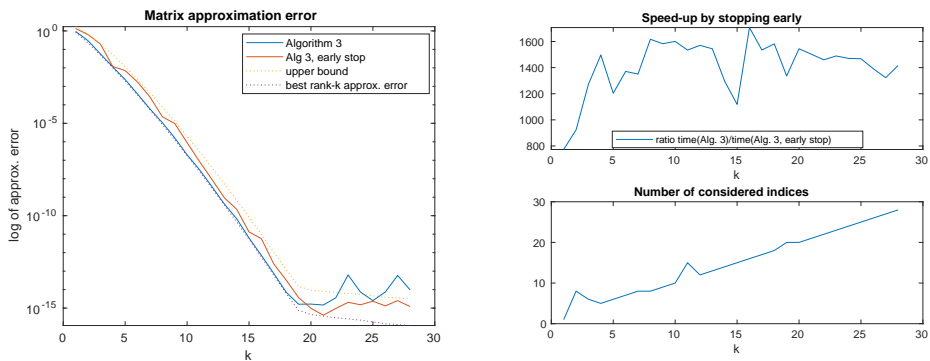}}
    \caption{Results for matrix $A_1$}
    \label{fig:hilbert100ACA}
\end{figure}

\begin{figure}[ht]
    \centerline{\includegraphics[width=\textwidth]{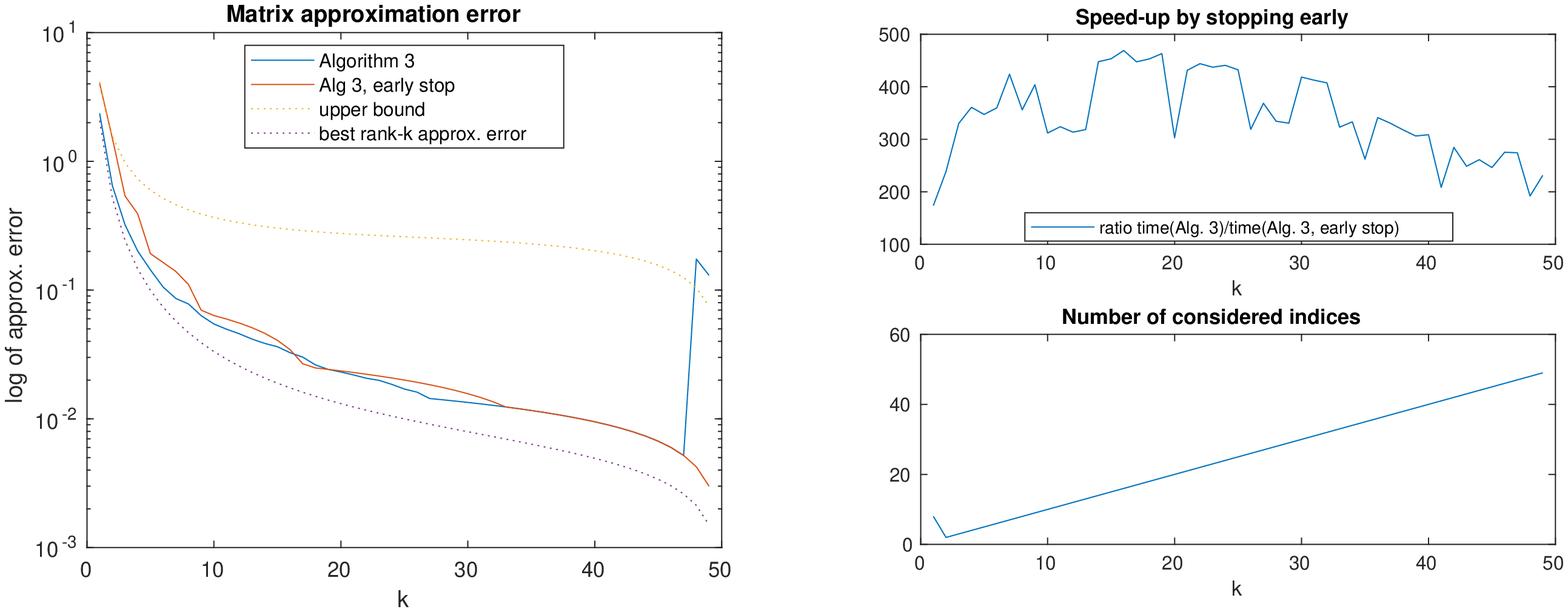}}
    \caption{Results for matrix $A_2$}
    \label{fig:exp50100ACA}
\end{figure}

\begin{figure}[ht]
    \centerline{\includegraphics[width=\textwidth]{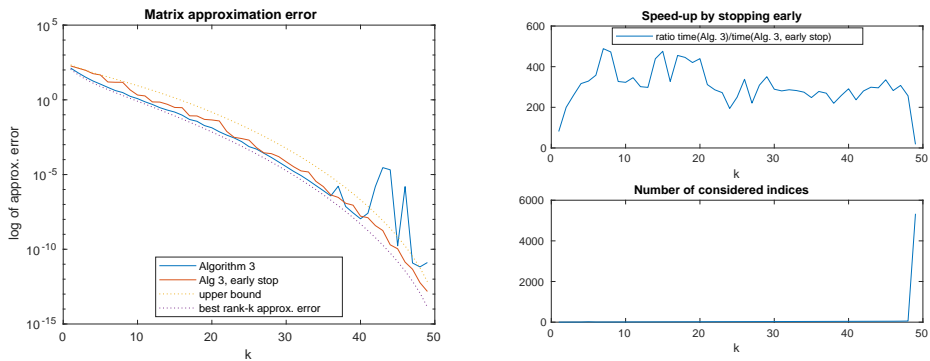}}
        \caption{Results for matrix $A_3$}
    \label{fig:p50100ACA}
\end{figure}

We also consider the $n \times n$ matrix $A = LDL^T$, where
\begin{equation*}
    L = \begin{bmatrix} 1 & & & & \\
        -c & 1 & & & \\
        -c & -c & 1 & & \\
        \vdots & \vdots & & \ddots & \\
        -c & -c & -c & \cdots & 1 \end{bmatrix}, \qquad D = \begin{bmatrix} 1 & & & &  \\ & s^2 & & & \\ & & s^4 & & \\ & & & \ddots & \\ & & & & s^{2(n-1)} \end{bmatrix}
\end{equation*}
with $s=\sin(\theta), c = \cos(\theta)$ for some $0<\theta<\pi$. This is known to be a challenging example for greedy cross approximation~\cite{Harbrecht2012}: When $k = n-1$ the greedy algorithm selects the leading $k \times k$ submatrix and returns an approximation error that is exponentially larger than the best approximation error. In contrast, Algorithm~\ref{alg:derandomizedACA}, with and without early stopping, makes the correct choice by selecting the last $n-1$ rows and columns. For instance, for $n=6$ and $\theta=0.1$, we obtain the error
\begin{equation*}
    \| A - A(:,2:6)A(2:6,2:6)^{-1} A(2:6,:)\|_F \approx 3.9 \cdot 10^{-13} < 1.8 \cdot 10^{-12} \approx \sqrt{6}\sigma_{n}.
\end{equation*}
Selecting the first $5$ rows and columns results in an error of $9.8 \cdot 10^{-11}$.

Finally, we would like to point out an interesting observation concerning the preservation of structure. In joint work with Massei~\cite{CKM2019}, we have shown that for a symmetric positive definite matrix $A$ there is always a symmetric choice of indices, $J = I$, leading to a symmetric cross approximation such that the favorable error bound of Goreinov and Tyrtyshnikov~\cite{GoreinovTyrtyshnikov2001} is attained. 
For cross approximation in the Frobenius norm, the situation appears to be more complicated; it is generally not true that a symmetric choice of indices achieves the error bound~\eqref{eq:ACAfrob} even when $A$ is symmetric positive definite. For instance, for $n=3$ and $k=1$ consider
\begin{equation*}
    A = \begin{bmatrix}
            1.87 & -1.82 & -2.11 \\
            -1.82 & 1.87 & 2.11 \\
            -2.11 & 2.11 & 2.54
        \end{bmatrix}.
\end{equation*}
The best symmetric choice is $I = J = \{ 3\}$ but this leads to an error $\approx 0.1911 > 2\sqrt{\sigma_2^2 + \sigma_3^2} \approx 0.1821$.


\section{Tensor approximation} \label{sec:tensors}

As shown, e.g., in~\cite{Drineas2007, Goreinov2008, Saibaba2016}, column subset selection can be used to approximate tensors as well. In the following, we demonstrate the use of the algorithm from Section~\ref{sec:colsel} for to obtain approximations of low multilinear rank constructed from the fibers of a third-order tensor $\mathcal{A} \in \R^{n_1 \times n_2 \times n_3}$.

First, we briefly recall some basic definitions for tensors and refer to~\cite{Kolda2009} for more details. Generalizing the notion of rows and columns of a matrix, the vectors obtained from $\mathcal{A}$ by fixing all indices but the $\mu$th one are called \emph{$\mu$-mode fibers}. The matrix $A^{(\mu)} \in \R^{n_\mu \times (n_1n_2n_3)/n_\mu}$ containing all $\mu$-mode fibers as columns is called the \emph{$\mu$-mode matricization} of $\mathcal{A}$. The $\mu$-mode product of a matrix $B \in \R^{m \times n_{\mu}}$ with $\mathcal{A}$ is denoted by $B \times_{\mu} \mathcal{A}$ and it is the tensor such that its $\mu$-mode matricization is given by $B\cdot A^{(\mu)}$.
We use the Frobenius norm of a tensor defined by
\begin{equation*}
    \| \mathcal{A} \|^2_F := {\sum_{i_1=1}^{n_1} \sum_{i_2=1}^{n_2} \sum_{i_3 = 1}^{n_3} \mathcal{A}(i_1,i_2,i_3)^2}
\end{equation*}
and recall that $\|\mathcal{A}\|_F = \| A^{(\mu)} \|_F$ for $\mu = 1,2,3$. The tuple $(k_1,k_2,k_3)$ defined by $k_\mu = \rank(A^{(\mu)})$ is called the multilinear rank of $\mathcal A$ and we can decompose $\mathcal{A}$ as
\begin{equation*}
    \mathcal{A} = B_1 \times_1 B_2 \times_2 B_3 \times_3 \mathcal{C},
\end{equation*}
for coefficient matrices $B_\mu \in \R^{n_\mu \times k_\mu}$ for $\mu=1,2,3$ and a so called \emph{core tensor} $\mathcal{C} \in \R^{k_1 \times k_2 \times k_3}$. 
This so called \emph{Tucker decomposition} is particularly beneficial when the multilinear rank is much smaller than the size of a tensor.

Algorithm~\ref{alg:CST} produces an approximate Tucker decomposition for a given tensor such each coefficient matrix $B_\mu$ is composed of $\mu$-mode fibers.
The following result shows that the obtained approximation error remains close to the best approximation error.

\begin{algorithm} \small 
    \begin{algorithmic}[1]\caption{Approximation of tensors by column selection}\label{alg:CST}
        \REQUIRE{Tensor $\mathcal{A} \in \R^{n_1 \times n_2 \times n_3}$, integers $k_1, k_2, k_3$}
        \ENSURE{Approximate Tucker decomposition of multilinear rank $(k_1,k_2,k_3)$ in terms of coefficient matrices $B_1, B_2, B_3$ and core tensor $\mathcal{C}$}
        \FOR{$\mu = 1, 2, 3$}
            \STATE{Compute $B_\mu = A^{(\mu)}(:,S_\mu)$ by applying Algorithm \ref{alg:CS} to select $k_\mu$ columns from $A^{(\mu)}$}
        \ENDFOR
        \STATE{Compute $\mathcal{C} = B_1^{+} \times_1 B_2^{+} \times_{2} B_3^{+} \times_3 \mathcal{A}$}
    \end{algorithmic}
\end{algorithm}

\begin{corollary}\label{cor:CStensors}
   Consider $\mathcal{A} \in \R^{n_1 \times n_2 \times n_3}$ and integers $k_1, k_2, k_3$ such that $1\le k_\mu\le n_\mu$ for $\mu = 1,2,3$. Then the output of Algorithm~\ref{alg:CST} satisfies
    \begin{equation*}
        \| \mathcal{A} - B_1 \times_1 B_2 \times_2 B_3 \times_3 \mathcal{C} \|_F \le \sqrt{k_1 + k_2 + k_3 + 3} \cdot \| \mathcal{A} - \mathcal{A}_{\mathrm{best}} \|_F,
    \end{equation*}
    where $\mathcal{A}_{\mathrm{best}}$ is the best Tucker approximation of $\mathcal{A}$ of multilinear rank at most $(k_1, k_2, k_3)$.
\end{corollary}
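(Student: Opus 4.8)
The plan is to reduce the tensor bound to three applications of the matrix column subset selection bound~\eqref{eq:CS}, one per mode, glued together by a Pythagorean splitting of the error. First I would observe that, by the defining property of the mode product, $B_\mu \times_\mu (B_\mu^+ \times_\mu \mathcal{A}) = (B_\mu B_\mu^+) \times_\mu \mathcal{A}$, and since mode products in distinct modes commute, the approximation returned by Algorithm~\ref{alg:CST} can be rewritten as
\[
B_1 \times_1 B_2 \times_2 B_3 \times_3 \mathcal{C} = P_1 \times_1 P_2 \times_2 P_3 \times_3 \mathcal{A},
\]
where $P_\mu = B_\mu B_\mu^+$ is the orthogonal projector onto the span of the selected $\mu$-mode fibers. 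Writing $\mathcal{P}_\mu$ for the linear operator on $\R^{n_1 \times n_2 \times n_3}$ that applies $P_\mu$ in mode $\mu$, each $\mathcal{P}_\mu$ is self-adjoint and idempotent with respect to the Frobenius inner product, and $\mathcal{P}_\mu$, $\mathcal{P}_\nu$ commute for $\mu \neq \nu$ because they act on different indices.

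The core of the argument is the telescoping identity
\[
\mathcal{A} - \mathcal{P}_1 \mathcal{P}_2 \mathcal{P}_3 \mathcal{A} = (I - \mathcal{P}_1)\mathcal{A} + \mathcal{P}_1 (I - \mathcal{P}_2)\mathcal{A} + \mathcal{P}_1 \mathcal{P}_2 (I - \mathcal{P}_3)\mathcal{A},
\]
whose three summands I claim are mutually orthogonal. Each cross term vanishes after moving adjoints across the inner product and using $(I - \mathcal{P}_\mu)\mathcal{P}_\mu = 0$ together with the commutativity of projectors in distinct modes; for instance the first two terms pair to $\langle \mathcal{A}, (I-\mathcal{P}_1)\mathcal{P}_1(I-\mathcal{P}_2)\mathcal{A}\rangle = 0$. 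By the Pythagorean theorem the squared error is then the sum of the squared norms of the three summands. For the first summand, $\|(I-\mathcal{P}_1)\mathcal{A}\|_F = \|(I - P_1)A^{(1)}\|_F$, which is exactly the column subset selection error for $A^{(1)}$ and is thus bounded by~\eqref{eq:CS} by $(k_1+1)\sum_{i>k_1}\sigma_i(A^{(1)})^2$. For the second and third summands I would use that $\mathcal{P}_1$ and $\mathcal{P}_1\mathcal{P}_2$ are contractions in the Frobenius norm, so that $\|\mathcal{P}_1(I-\mathcal{P}_2)\mathcal{A}\|_F \le \|(I-\mathcal{P}_2)\mathcal{A}\|_F = \|(I-P_2)A^{(2)}\|_F$ and likewise for mode $3$; applying~\eqref{eq:CS} to $A^{(2)}$ and $A^{(3)}$ produces the factors $k_2+1$ and $k_3+1$.

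It remains to replace the mode-wise tail sums $\sum_{i>k_\mu}\sigma_i(A^{(\mu)})^2$ by the best Tucker error. Here I would use that the $\mu$-mode matricization of any tensor of multilinear rank at most $(k_1,k_2,k_3)$ has rank at most $k_\mu$; in particular $(\mathcal{A}_{\mathrm{best}})^{(\mu)}$ has rank at most $k_\mu$, so optimality of the truncated SVD gives
\[
\sum_{i>k_\mu}\sigma_i(A^{(\mu)})^2 \le \|A^{(\mu)} - (\mathcal{A}_{\mathrm{best}})^{(\mu)}\|_F^2 = \|\mathcal{A}-\mathcal{A}_{\mathrm{best}}\|_F^2
\]
for each $\mu$, where the last equality uses $\|\mathcal{A}\|_F = \|A^{(\mu)}\|_F$. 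Summing the three bounds and taking square roots yields the factor $\sqrt{(k_1+1)+(k_2+1)+(k_3+1)} = \sqrt{k_1+k_2+k_3+3}$, as claimed. The only delicate step is the orthogonality of the telescoping summands: obtaining the clean sum of squares, rather than a weaker triangle-inequality bound with a larger constant, is precisely what yields the stated factor, so I would carry out the cross-term computation carefully. Everything else is bookkeeping with mode products and the already-established matrix bound~\eqref{eq:CS}.
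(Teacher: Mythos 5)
Your proposal is correct and follows essentially the same route as the paper: the same orthogonal (Pythagorean) splitting of $\mathcal{A}-\mathcal{P}_1\mathcal{P}_2\mathcal{P}_3\mathcal{A}$ into three mode-wise terms, the same contraction bound reducing each to $\|(I-\mathcal{P}_\mu)\mathcal{A}\|_F$, the per-mode application of~\eqref{eq:CS}, and the rank-$k_\mu$ argument comparing the singular-value tails to $\|\mathcal{A}-\mathcal{A}_{\mathrm{best}}\|_F$. The only cosmetic difference is that you verify the orthogonality of the telescoping summands directly, whereas the paper cites it from the literature.
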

\begin{proof}
The proof is similar to existing proofs on the quasi-optimality of the Higher-Order SVD~\cite{DeLathauwer2000} and related results in~\cite{Drineas2007, Goreinov2008, Saibaba2016}.

Using~\eqref{eq:CS} and setting $\pi_{\mu} = B_\mu B_{\mu}^{+}$, the result of Algorithm~\ref{alg:CS} applied to $A^{(\mu)}$ satisfies
    \begin{align*}
        \| A^{(\mu)} - \pi_{\mu}(A^{(\mu)}) \|_F^2 & \le (k_\mu + 1) \big( \sigma_{k_\mu + 1}( A^{(\mu)} )^2 + \cdots  + \sigma_{n_\mu}( A^{(\mu)} )^2 \big)  \\
        & \le (k_\mu + 1) \| A^{(\mu)} - A_{\mathrm{best}}^{(\mu)} \|_F^2 
        = (k_\mu + 1) \| \mathcal{A} - \mathcal{A}_{\mathrm{best}} \|_F^2,
    \end{align*}
    where the second inequality follows from the fact that $A_{\mathrm{best}}^{(\mu)}$, the $\mu$-mode matricization of $\mathcal{A}_{\mathrm{best}}$, has rank at most $k_\mu$. Using the orthogonality of the projections $\pi_\mu$, we obtain
    \begin{align*}
        \| \mathcal{A} & - B_1 \times B_2 \times B_3 \times \mathcal{C} \|_F^2 =
        \| \mathcal{A}  - \pi_1 \times \pi_2 \times \pi_3 \times \mathcal{A} \|_F^2 \\
        & = \| \mathcal{A} - \pi_1 \times \mathcal{A} \|_F^2 + \| \pi_1 \times (\mathcal{A} - \pi_2 \times \mathcal{A}) \|_F^2 + \| \pi_1 \times \pi_2 \times (\mathcal{A} - \pi_3 \times \mathcal{A}) \|_F^2 \\
        & \le \sum_{\mu = 1}^3 \| \mathcal{A} - \pi_\mu \times \mathcal{A} \|_F^2 = \sum_{\mu = 1}^3 \| A^{(\mu)} - \pi_{\mu}(A^{(\mu)}) \|_F^2 \le \sum_{\mu = 1}^3 (k_\mu + 1) \|\mathcal{A} - \mathcal{A}_{\mathrm{best}} \|_F^2 \\
        & = (k_1 + k_2 + k_3 + 3)\| \mathcal{A} - \mathcal{A}_{\mathrm{best}} \|_F^2,
    \end{align*}
    where the second equality follows from~\cite[Theorem 5.1]{Vannieuwenhoven2012}.
\end{proof}

\begin{remark} Algorithm~\ref{alg:CST} easily generalizes to tensors of arbitrary order. Given a tensor $\mathcal{A}\in\R^{n_1 \times \ldots \times n_d}$ and integers $k_1, \ldots, k_d$, this generalization constructs subsets of fibers $B_1, \ldots, B_d$ and a core tensor $\mathcal{C}$ such that
\begin{equation*}
    \| \mathcal{A} - B_1 \times_1 \ldots \times_{d-1} B_d \times_d \mathcal{C} \|_F \le \sqrt{k_1 + \ldots + k_d + d} \cdot \| \mathcal{A} - \mathcal{A}_{\mathrm{best}} \|_F.
\end{equation*}
\end{remark}

\subsection{Numerical experiments}
We have implemented Algorithm~\ref{alg:CST} in Matlab and tested it on two $50 \times 50 \times 50$ tensors, given by $\mathcal A_1(i,j,h) = \frac{1}{i+j+h-1}$ and $\mathcal A_2(i,j,h) = \left ( i^{10} + j^{10} + h^{10} \right )^{1/10} / 50$. We choose $k_1 = k_2 = k_3 = k$ and report in Figure~\ref{fig:plotsTensor} the obtained approximation errors $\| \mathcal{A}_i - B_1 \times B_2 \times B_3 \times \mathcal{C} \|_F$ for different values of $k$, where $B_1 ,B_2, B_3$, $\mathcal{C}$ are returned by Algorithm~\ref{alg:CST}, with and without early stopping in the column selection part. We compare with the quantity \[
 \Big( \sum_{\mu = 1}^3 \sigma_{k_\mu + 1}( A^{(\mu)} )^2 + \cdots  + \sigma_{n_\mu}( A^{(\mu)} )^2 \Big)^{1/2},
\]
which provides a (tight) upper bound on the best approximation error.
It can be seen that the errors obtained from Algorithm~\ref{alg:CST} remain close to this quasi-best approximation error.
 \begin{figure}[ht]
    \centerline{\includegraphics[width=\textwidth]{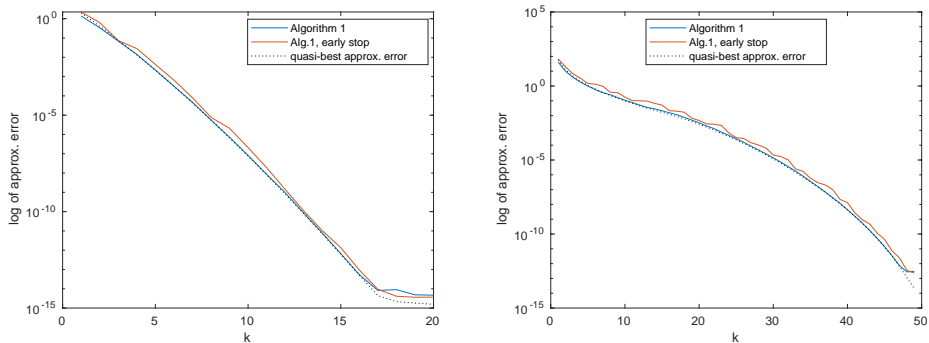}}
    \caption{Results for tensors $\mathcal{A}_1$ (left) and $\mathcal{A}_2$ (right).    \label{fig:plotsTensor}}
\end{figure}

\section{Conclusions}

In this work, we have proposed several improvements to the column selection algorithm by Deshpande and Rademacher~\cite{Deshpande2010}. The numerical experiments indicate that updating singular values (instead of characteristic polynomials) leads to numerical robustness, in the sense that the approximation error obtained in finite precision arithmetic is not affected unduly by roundoff error. We have also developed an extension of~\cite{Deshpande2010} to produce cross approximations of matrices and, to the best of our knowledge, this extension constitutes the first deterministic polynomial time algorithm that yields a cross approximation with a guaranteed polynomial error bound. 
We have introduced a mechanism for stopping early the search for indices in column subset selection or cross approximation. Although relatively simple, this mechanism tremendously reduces the execution time for all examples tested.

A number of issues remain for future study, such as the numerical stability analysis of our algorithms. In particular, it would be desirable to study the numerical robustness of the cross approximation returned by Algorithm~\ref{alg:derandomizedACA} with early stopping. Also, by combining early stopping with a more aggressive reuse of the SVD might lead to further complexity reduction, but a rigorous complexity analysis would require deeper understanding of early stopping, well beyond the limited scope of Lemma~\ref{lemma:indication}. Finally, we would like to stress that the algorithms presented in this work are intented for small to medium sized matrices and tensors. For large-scale data, the 
algorithms presented in this paper need to be combined with other, possibly heuristic dimensionality reduction techniques.

\begin{paragraph}{Acknowledgements.} The authors thank Sergey Dolgov for helpful discussions on topics related to the work presented in this paper.
\end{paragraph}

	\bibliographystyle{abbrv}
	\bibliography{Bib} 
	
	\end{document}